\documentclass{amcjoucc}

\usepackage{hyperref}
\usepackage{amssymb}
\usepackage[capitalize]{cleveref}
\usepackage{enumitem}
    \setlist{itemsep=\smallskipamount}
	\setlist[enumerate, 1]{label =\textup{(\arabic*)}, ref=\arabic*}
	\setlist[enumerate, 2]{label =\textup{(\alph*)}, ref=\theenumi\alph*, topsep=\smallskipamount}
	\setlist[itemize, 2]{label=$\circ$, topsep=\smallskipamount}
\usepackage{mathtools} 

\DeclareMathOperator{\Cay}{Cay}

\newcommand{\Z}{\mathbb{Z}}

\newcommand{\0}{{0}}
\newcommand{\1}{\mathbf{1}}
\renewcommand{\and}{\mathbin{\&}}

\newcommand{\compose}{\mathbin{\circ}}
\newcommand{\iso}{\cong}

\newcommand{\sgood}[1]{S^{#1}_{\!good}}
\newcommand{\var}[2]{X{}_{#1}^{#2}}

\newcommand{\pref}[1]{(\ref{#1})}
\newcommand{\fullref}[2]{\ref{#1}\pref{#1-#2}}
\newcommand{\fullcref}[2]{\cref{#1}\pref{#1-#2}}
\newcommand{\fullCref}[2]{\Cref{#1}\pref{#1-#2}}
\newcommand{\csee}[1]{(see \cref{#1})}
\newcommand{\fullcsee}[2]{(see \fullcref{#1}{#2})}
\newcommand{\arxiv}[1]{\href{https://arxiv.org/abs/#1}{arXiv:#1}}

\newcommand{\MR}[1]{\href{https://mathscinet.ams.org/mathscinet-getitem?mr=#1}{MR\,#1}}
\newcommand{\doi}[1]{\href{https://doi.org/#1}{doi:#1}}

\numberwithin{equation}{section}
\newtheorem{cor}[equation]{Corollary}
\newtheorem{lem}[equation]{Lemma}
\newtheorem{prop}[equation]{Proposition}
\newtheorem{thm}[equation]{Theorem}

\Crefname{lem}{Lemma}{Lemmas}
\Crefname{thm}{Theorem}{Theorems}

\theoremstyle{definition}
\newtheorem*{ack}{Acknowledgments}
\newtheorem{defn}[equation]{Definition}

\newtheorem{rem}[equation]{Remark}
\newtheorem{rems}[equation]{Remarks}

\newcounter{case}

\renewcommand{\thecase}{\arabic{case}}
\crefformat{case}{Case~#2#1#3}
\Crefname{case}{Case}{Cases}
\crefname{case}{case}{cases}

\newcounter{stepholder} 
\newcounter{step}
\newenvironment{step}{\refstepcounter{step}\bf\sffamily
\par\medskip \indent Step \thestep.\ \it\ignorespaces}{\unskip\upshape}
\renewcommand{\thestep}{\arabic{step}}
\crefformat{step}{Step~#2#1#3}
\Crefname{step}{Step}{Steps}
\crefname{step}{case}{cases}

\makeatletter
\newcommand{\noprelistbreak}{\smallskip\@nobreaktrue\nopagebreak} 
\makeatother

\makeatletter
\@finaltrue
\renewenvironment{frontmatter}
{\thispagestyle{plain}}
{\vskip 20pt%
\blfootnote{\raggedright \ifnum\@authorcount=1\textit{E-mail address:}\else\textit{E-mail addresses:}\fi ~\@emails}%
\if@proofsline\global\linenumbers\fi%
}

\renewenvironment{abstract}
{\hrule height 0.25pt
\vskip 5pt
\noindent \textbf{Abstract}
\vskip 5pt
}
{
\vskip 5pt
\noindent \textit{\small Keywords:~\@keywords}

\vskip 3pt
\noindent \textit{\small Math.\ Subj.\ Class. (2020): \@msc}
\vskip 5pt
\hrule height 0.25pt
}
\def\@oddrunninghead{Isomorphisms of edge-coloured abelian Cayley graphs}
\def\@evenrunninghead{\@oddrunninghead}
\makeatother

\begin{document}

\begin{frontmatter} 

\titledata{Isomorphisms of abelian Cayley graphs 
	\\ with their natural edge-colouring}{}

\authordatatwo{Shirin Alimirzaei}
{shirin.alimirzaei@uleth.ca}  
{0000-0002-9983-7462}    
{} 
{Dave Witte Morris}%
{dmorris@deductivepress.ca,
    https://deductivepress.ca/dmorris}%
{} 
{} 
{Department of Mathematics and Computer Science, 
University of Lethbridge, 
\\ 
4401 University Drive, 
Lethbridge, Alberta, T1K~3M4, Canada}

\keywords{Cayley graph, colour-preserving isomorphism, graph isomorphism, graph automorphism, abelian group.}

\msc{05C25, 
    05C60 
    }

\begin{abstract}
We prove that if $\varphi$ is an isomorphism between two connected Cayley graphs of abelian groups, and $\varphi$ respects the natural edge-colourings of the Cayley graphs, then $\varphi$ is the composition of a group isomorphism and a colour-preserving graph automorphism. This implies that if every colour-preserving automorphism of a connected abelian Cayley graph $\Cay(G;S)$ is an affine map, then the same is true for every colour-permuting automorphism. We also show that this property holds if and only if the subgroup generated by
    $ \{\, s \in S \mid 2s \neq c \,\} \cup \{c\}$
has index $\le 2$ for every element~$c$ of order~$2$ in~$G$.
\end{abstract}

\end{frontmatter}

\section{Introduction}

\begin{defn}[cf.\ {\cite[Defns.~1.1 and~1.2]{HKMM}}]
Let $S$ be a subset of an abelian group~$G$. If $S$ is \emph{symmetric} (which means $S = -S$), then the \emph{abelian Cayley graph} $\Cay(G; S)$ is the (undirected) graph whose vertices are the elements of~$G$, with an edge joining $g$ and~$g + s$ for all $g \in G$ and $s \in S$. (Cayley graphs can be defined from any group, but we add the adjective ``abelian'' to specify that the group is required to be abelian in this paper.)
This graph has a natural edge-colouring: the edge joining $g$ and~$g + s$ is coloured with the set $\{\pm s\}$.
	\begin{itemize}
	\item An automorphism $\varphi$ of $\Cay(G; S)$ is \emph{colour-preserving} if it preserves the colours of the edges. (In other words, for all $g \in G$ and $s \in S$, we have $\varphi(g + s) \in \{\varphi(g) \pm s\}$.) 
	\item An isomorphism $\varphi$ from one abelian Cayley graph $\Cay(G_1; S_1)$ to another abelian Cayley graph $\Cay(G_2; S_2)$ is \emph{chromatic} if it respects the edge-colourings. More concretely, there is a bijection $\pi \colon S_1 \to S_2$, such that $\varphi(g + s) \in \{\varphi(g) \pm \pi(s) \}$ for all $g \in G_1$ and $s \in S_1$.
	\end{itemize}
\end{defn}

It is easy to see that if $\tau \colon G_1 \to G_2$ is a group isomorphism, then it is a chromatic isomorphism from $\Cay(G_1; S_1)$ to $\Cay \bigl( G_2; \tau(S_1) \bigr)$, for any symmetric subset~$S_1$ of~$G_1$ (cf.\ \cite[p.~190]{HKMM}). It is also obvious that every colour-preserving automorphism is a chromatic isomorphism. The following result shows in the abelian case that every chromatic isomorphism is a combination of these two obvious types.

\begin{thm} \label{ChromIso}
If\/ $\Cay(G_1; S_1)$ and\/ $\Cay(G_2; S_2)$ are connected abelian Cayley graphs, then every chromatic isomorphism from $\Cay(G_1; S_1)$ to $\Cay(G_2; S_2)$ is the composition of a colour-preserving automorphism of\/ $\Cay(G_1; S_1)$ and a group isomorphism from $G_1$ to~$G_2$.
\end{thm}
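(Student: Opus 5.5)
The plan is to reduce the theorem to finding a single group isomorphism $\tau\colon G_1\to G_2$ that is compatible with the colour bijection~$\pi$.

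\emph{Reduction.} Let $\varphi$ be a chromatic isomorphism with colour bijection $\pi$. Translations of $G_1$ are colour-preserving automorphisms of $\Cay(G_1;S_1)$, so after composing with a translation we may assume $\varphi(0)=0$. Suppose we find a group isomorphism $\tau\colon G_1 \to G_2$ with $\tau(s) \in \{\pm \pi(s)\}$ for every $s \in S_1$. Then $\tau(S_1)=S_2$, and $\alpha := \tau^{-1}\compose\varphi$ is a chromatic automorphism of $\Cay(G_1;S_1)$ whose colour bijection is the identity on colour classes. So $\alpha$ is colour-preserving, and $\varphi=\tau\compose\alpha$ as required. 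Hence everything reduces to constructing such a~$\tau$.

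\emph{Local data at the base vertex.} A colour class $\{\pm s\}$ contributes two edges at each vertex if $2s\neq 0$, and one edge if $2s=0$. Since $\varphi$ is a bijection on neighbourhoods that respects colours, $2s=0$ if and only if $2\pi(s)=0$. Moreover $\varphi$ maps $\{s,-s\}$ bijectively onto $\{\pi(s),-\pi(s)\}$, so $\varphi(-s)=-\varphi(s)$. Set $\tau_0(s):=\varphi(s)$ for $s\in S_1$. We want $\tau_0$ to extend to a homomorphism. Since $S_1$ generates $G_1$ by connectedness, this means every relation $\sum n_s s=0$ in $G_1$ must give $\sum n_s \tau_0(s)=0$ in $G_2$. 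By symmetry (apply the same argument to $\varphi^{-1}$), the converse also holds, so $\tau$ is then an isomorphism.

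\emph{Propagating the signs.} The natural tool is the 4-cycle $0, s, s+t, t$. Its image forces
\[
\varphi(s+t) \in \{\varphi(s)\pm\pi(t)\}\cap\{\varphi(t)\pm\pi(s)\}.
\]
Generically this intersection is exactly $\{\varphi(s)+\varphi(t)\}$. I would like to show inductively along word length that $\varphi(g+s)-\varphi(g)$ equals $\tau_0(s)$ for all $g$, i.e.\ that $\varphi$ is ``translation-equivariant''. Once that holds, $\varphi$ itself is a homomorphism and we may take $\tau=\varphi$. However, this cannot be true in general, since colour-preserving non-affine automorphisms exist. So the real strategy is to modify $\varphi$ by colour-preserving automorphisms of $\Cay(G_1;S_1)$ until it becomes affine.

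\emph{Main obstacle: degenerate intersections.} The hard part is the set of degenerate cases where the intersection above is larger, which requires $2(\varphi(s)-\varphi(t))=0$ or similar coincidences involving elements of order~$2$. These are exactly the situations the abstract's index condition refers to. My plan there is as follows.
\begin{enumerate}
\item Fix an element $c$ of order~$2$ arising this way, and let $H$ be the subgroup generated by $\{\,s \in S_1 \mid 2s\neq c\,\}\cup\{c\}$.
\item Show that on each coset of $H$ the map $\varphi$ agrees with an affine map, up to the sign flip $x\mapsto -x$ composed with translation.
\item Show that the map which applies $x\mapsto c-x$ (or the identity) independently on the cosets of $H$ is a colour-preserving automorphism. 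This uses that each generator $s$ with $2s=c$ satisfies $s\equiv -s+c$.
\item Compose $\varphi$ with suitable such automorphisms to kill the sign discrepancies coset by coset.
\end{enumerate}
I expect verifying that these coset-wise corrections are simultaneously consistent, so that the resulting map genuinely satisfies the relation check of the previous step for all relations and not just commutation 4-cycles, to be the main difficulty. I would attack it by induction on $|G_1|$, passing to the quotient by $\langle c\rangle$.
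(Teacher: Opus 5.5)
Your reduction (find a group isomorphism $\tau$ with $\tau(s)\in\{\pm\pi(s)\}$; then $\tau^{-1}\compose\varphi$ is colour-preserving) is correct, and correcting $\varphi$ by colour-preserving automorphisms is also the paper's strategy. However, the proposal stops where the proof begins, and the one concrete correction you propose is wrong. In step~3, applying $x\mapsto c-x$ on one coset of $H$ and the identity elsewhere need not even map edges to edges. Take $\Z_4\oplus\Z_2\oplus\Z_3$ with $S=\{\pm(1,0,0),\pm(1,1,0),\pm(0,0,1)\}$ and $c=(2,0,0)$, so $H=\langle c,(0,0,1)\rangle$. Reflecting $H$ sends the edge $\{(0,0,1),(1,0,1)\}$ to $\{(2,0,2),(1,0,1)\}$, whose difference $(3,0,2)$ is not in~$S$. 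What works is a \emph{translation} by $c=2s$ (with $|s|=4$) on a $c$-invariant set, provided every edge leaving the set has colour $\pm u$ with $2u=c$. This holds automatically for a union of cosets of $H$, and then $u+c=-u$ preserves the colour. That is the paper's $\alpha$, namely $g\mapsto g+2s$ on $V=G_1\smallsetminus\bigl(A(\varphi,s)\cup A(\varphi,t)\bigr)$. Step~2 is asserted without argument and is not the right structural statement; it fails once several distinct values $2s$ occur. What is true is per colour: $\pi^\varphi_g(s)$ is constant on cosets of $\langle T(S_1,2s)\rangle$.

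Two essential ingredients are missing.

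First, you need the key lemma (\cref{EltsOrder4}). Your base-vertex $4$-cycle only detects $2\hat s=2\hat t$. You need, at every vertex $g$ (with $\hat s=\pi^\varphi_g(s)$, $\hat t=\pi^\varphi_g(t)$), that $\pi^\varphi_{g+t}(s)\neq\pi^\varphi_g(s)$ forces $\pi^\varphi_{g+s}(t)=\hat t-2\hat s=-\hat t$. Injectivity then gives $2s=2t$, and comparing $\varphi(g+3s)$ with $\varphi(g+s+2t)$ gives $4\hat s=0$, so $|s|=|t|=4$. Without this you cannot justify that $c=2s$ has order~$2$, nor the coset-constancy above (which also uses $\pi^\varphi_{g+s}(s)=\pi^\varphi_g(s)$).

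Second, the consistency issue you defer is the heart of the proof. ``Induction on $|G_1|$ via $G_1/\langle c\rangle$'' is not an argument: you do not say what the hypothesis yields on the quotient, where colour classes can merge, or how to lift it. It also cannot reach infinite groups. The paper uses monotonicity instead: each correction~$\alpha$ keeps $\sgood{\varphi}$ from shrinking while strictly enlarging $A(\varphi,s_k)$. These sets are unions of cosets of $\langle T(S_1,2s_k)\rangle\supseteq 2G_1$, which has finite index when $S_1$ is finite, so the process terminates. A compactness argument (\cref{ChromIsoFG}) then removes the finiteness assumption.
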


This implies that every abelian group is determined (within the category of abelian groups) by any of its edge-coloured connected Cayley graphs:

\begin{cor} \label{ChromIsoImpliesGIso}
Assume $G_1$ and $G_2$ are abelian groups. If there is a chromatic isomorphism from some connected Cayley graph of~$G_1$ to some Cayley graph of~$G_2$, then $G_1$ is isomorphic to~$G_2$.
\end{cor}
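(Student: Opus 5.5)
This follows directly from \cref{ChromIso}, so the plan is to reduce to that theorem.

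Let $\varphi \colon \Cay(G_1; S_1) \to \Cay(G_2; S_2)$ be a chromatic isomorphism, where $\Cay(G_1;S_1)$ is connected. A graph isomorphism preserves connectedness, so $\Cay(G_2; S_2)$ is also connected. Both Cayley graphs are abelian Cayley graphs, because $G_1$ and $G_2$ are abelian and the connection sets are symmetric by the definition of a Cayley graph. The hypotheses of \cref{ChromIso} therefore hold.

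By \cref{ChromIso}, we can write $\varphi = \tau \compose \alpha$ (or $\alpha \compose \tau$, whichever order the theorem gives), where $\alpha$ is a colour-preserving automorphism of $\Cay(G_1;S_1)$ and $\tau \colon G_1 \to G_2$ is a group isomorphism. The existence of $\tau$ is exactly the claim $G_1 \iso G_2$.

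There is no real obstacle. The only thing to check is that connectedness of the target graph is inherited through $\varphi$, and this holds because $\varphi$ is a bijection on vertices that preserves adjacency in both directions.
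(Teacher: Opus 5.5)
Your proposal is correct and matches the paper, which treats this corollary as an immediate consequence of \cref{ChromIso}: the group isomorphism $\tau$ in the decomposition $\varphi = \tau \compose \alpha$ gives $G_1 \iso G_2$. You also check that the target Cayley graph is connected, since a graph isomorphism preserves connectedness; this is exactly the small point needed to apply the theorem as stated.
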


\begin{rem} \label{NotSameGroup}
It is well known that \cref{ChromIsoImpliesGIso}'s assumption that $G_2$ is abelian cannot be removed. 
For example, let $D_{2n} = \langle a, b \mid a^n = b^2 = abab = \1 \rangle$ be the dihedral group of order~$2n$. Then $\Z_n \oplus \Z_2 \not\iso D_{2n}$ (if $n \ge 3$), but 
	\[ \text{$\Cay \bigl( \Z_n \oplus \Z_2; \{ \pm(1,0), (0,1)\} \bigr)$ is chromatically isomorphic to $\Cay \bigl( D_{2n}; \{a^{\pm1}, b\} \bigr)$.} \]

\end{rem}

\begin{defn}[cf.\ {\cite[Defns.~1.2 and 1.4]{HKMM}}]
\leavevmode
\noprelistbreak
\begin{itemize}
	\item A bijection $\varphi$ from an abelian group~$G_1$ to an abelian group~$G_2$ is \emph{affine} if it is the composition of a group isomorphism and a translation. More concretely, there is a group isomorphism $\tau \colon G_1 \to G_2$ and an element $g_2 \in G_2$, such that $\varphi(x) = \tau(x) + g_2$. Then there is also an element $g_1 \in G_1$, such that $\varphi(x) = \tau(x + g_1)$.
	(In this paper, all affine maps are bijections, but most authors do not have this restriction: they allow $\tau$ to be a homomorphism, rather than requiring it to be an isomorphism.)
	
	\item If an automorphism of an abelian Cayley graph is chromatic, then it is called a \emph{colour-permuting automorphism}.

	\item An abelian Cayley graph is:
		\begin{itemize}
		\item \emph{CCA} if every colour-preserving automorphism is affine;
		\item \emph{strongly CCA} if every colour-permuting automorphism is affine. 
		\end{itemize}
	\end{itemize}
\end{defn}

It is known that if every connected Cayley graph of a finite abelian group~$G$ is CCA, then every connected Cayley graph of~$G$ is strongly CCA \cite[Prop.~4.1]{HKMM}. (The converse is obvious.) The following \lcnamecref{Tcharacterization} strengthens this by establishing that the implication holds for each individual Cayley graph, not only when all of them are CCA (and also by removing the assumption that the group is finite). It also provides a concrete characterization of the abelian Cayley graphs that are CCA.

\begin{thm}\label{Tcharacterization}
If\/ $\Cay(G;S)$ is a connected abelian Cayley graph, then the following are equivalent:
\begin{enumerate}
   
    \item \label{Tcharacterization-CCA}
    $\Cay(G; S)$ is CCA.
    
    \item \label{Tcharacterization-strong}
    $\Cay(G; S)$ is strongly CCA.
     
    \item \label{Tcharacterization-index}
    $|G: \langle T(S,2s)\rangle| \le 2$ for all $s\in S$ with $|s|=4$, where 
        \[ T(S,c) \coloneqq \{\, t \in S \mid 2t \neq c \,\} \cup \{c\} . \]
 \end{enumerate}
\end{thm}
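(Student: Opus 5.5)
The plan is to prove $(2)\Rightarrow(1)$ (trivial), $(1)\Rightarrow(2)$ using \cref{ChromIso}, and then $(1)\Leftrightarrow(3)$ directly. Each colour-preserving automorphism is colour-permuting, so strong CCA implies CCA.

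For $(1)\Rightarrow(2)$, let $\varphi$ be a colour-permuting automorphism of $\Cay(G;S)$. Apply \cref{ChromIso} with $G_1=G_2=G$ and $S_1=S_2=S$. This writes $\varphi=\tau\compose\psi$, where $\psi$ is a colour-preserving automorphism and $\tau\colon G\to G$ is a group isomorphism. Here $\tau$ must carry $S$ onto $S$, since it is an isomorphism onto $\Cay(G;S)$. By (1), $\psi$ is affine, and composing an affine map with a group automorphism gives an affine map. Hence $\varphi$ is affine.

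For $(3)\Rightarrow(1)$, after composing with a translation we may assume the colour-preserving automorphism $\varphi$ fixes~$0$. The goal is then to show that $\varphi$ is a group automorphism.
\begin{itemize}
\item The standard first step is local. At each vertex $g$, define $\epsilon_g(s)\in\{\pm1\}$ by $\varphi(g+s)=\varphi(g)+\epsilon_g(s)s$; this is meaningful when $2s\neq0$.
\item Look at the $4$-cycles $g,\ g+s,\ g+s+t,\ g+t$. Colour preservation forces $\epsilon_{g+t}(s)=\epsilon_g(s)$, except in degenerate cases where $\varphi(g)+\epsilon s+\epsilon' t$ can be reached in two different ways.
\item Those exceptions occur exactly when $s-t$ and $s+t$ coincide up to sign with other relevant sums. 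The essential case is $2s=2t=c$ with $|c|=2$, which for $t=\pm s$ forces $|s|=4$.
\item Consequently the sign $\epsilon(s)$ is invariant under translation by every generator $t$ with $2t\neq 2s$. It is also invariant under translation by $c=2s$, because going around the cycle $g,g+s,g+2s,\dots$ shows $\varphi(g+2s)=\varphi(g)\pm2s$ compatibly.
\item Hence $\epsilon(s)$ is constant on cosets of $\langle T(S,2s)\rangle$.
\item When that subgroup is all of $G$, $\varphi$ acts on the colour class $\{\pm s\}$ by a single global sign.
\item When the index is exactly~$2$, the sign may differ between the two cosets. Here I would show that flipping $s\mapsto -s$ on one coset is still achieved by a global map. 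The point is that the elements $t$ with $2t=2s$ then lie in the other coset, and an affine correction absorbs the flip.
\item With constant signs $\epsilon(s)$, define $\tau$ on generators by $s\mapsto\epsilon(s)s$. Then $\varphi(g+s)=\varphi(g)+\tau(s)$ for all $g$ and $s$, so $\varphi$ is additive along all paths. This makes $\varphi$ a homomorphism, and it is bijective.
\end{itemize}

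For $(1)\Rightarrow(3)$, argue by contrapositive. Suppose $s$ has order~$4$ and $H=\langle T(S,2s)\rangle$ has index at least~$3$. Define $\varphi$ to be the identity on $H$ and on some cosets, and $g\mapsto g+2s$ (that is, the map swapping $s$ and $-s$ edges) on a nontrivial union of other cosets. The choice of which cosets to shift must be such that edges from generators outside $T(S,2s)$, which all have $2t=2s$, still map to edges of the same colour. This uses the identity $t+2s=-t$.

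A map of this kind is a colour-preserving automorphism. With at least three cosets, we can move one coset and not another, which breaks affineness: an affine map fixing~$0$ would be a homomorphism, yet it would be the identity on $H$ and a nontrivial shift on only some of the cosets.

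I expect the main obstacle to be the index-$2$ case of $(3)\Rightarrow(1)$, together with the bookkeeping of the degenerate $4$-cycles. These cycles involve several distinct $t$ with $2t=c$, and elements of order~$2$ in $S$ whose sign is meaningless.
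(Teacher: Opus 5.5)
Your $(2)\Rightarrow(1)$ and $(1)\Rightarrow(2)$ are exactly the paper's. Your $(1)\Rightarrow(3)$ is essentially the paper's construction; the paper shifts $H=\langle T(S,2s)\rangle$ itself by $2s$ and gets non-affineness from \cref{morehalfaffine,complement}. Note that \emph{any} union of $H$-cosets gives a colour-preserving bijection, because every $t\in S\smallsetminus H$ has $2t=2s$. So the choice of cosets is unconstrained; you only need to arrange that the fixed-point set is not a coset.

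The genuine gap is in $(3)\Rightarrow(1)$, at the index-$2$ case you flagged. Your proposed remedy cannot work. If $\pi^\varphi_g(s)$ equals $s$ on one coset of $H$ and $-s$ on the other, then it depends on~$g$, so $\varphi$ is \emph{not} affine by \fullcref{pi}{affineiff}. Composing with an affine map cannot change that, so no ``affine correction'' absorbs the flip. You must show the flip never occurs, and the invariances you list do not give this. For example, take $G=\Z_8\oplus\Z_2$, $S=\{\pm(1,0),\pm(2,0),\pm(2,1)\}$ and $s=(2,0)$. Then $H=\Z_8\oplus 0$ has index~$2$ and contains $s$. So ``constant on $H$-cosets'' still permits two different signs for $s$, even though (3) holds.

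You are missing two ingredients.
\begin{enumerate}
\item By \fullcref{pi}{chrom-coset}, $\pi^\varphi_{g+s}(s)=\pi^\varphi_g(s)$. So the sign of $s$ is constant on cosets of $\langle H,s\rangle$, not just of $H$; you only listed translation by $2s$.
\item The $4$-cycle relation is symmetric, by the last assertion of \cref{EltsOrder4}. If $\pi^\varphi_{g+t}(s)\neq\pi^\varphi_g(s)$, then also $\pi^\varphi_{g+s}(t)\neq\pi^\varphi_g(t)$, and $2t=2s$, so $\langle T(S,2t)\rangle=H$.
\end{enumerate}
Applying the first fact to $s$ gives $t\notin\langle H,s\rangle$, and applying it to $t$ gives $s\notin\langle H,t\rangle$. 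Hence $H$, $s+H$, $t+H$ are three distinct cosets and $|G:H|\ge 3$, contradicting (3). So under (3) no sign ever changes, and in the index-$2$ case the situation you planned to correct simply cannot arise. Then \fullcref{pi}{affineiff} finishes the proof, and your explicit construction of $\tau$ becomes unnecessary. With this patch your argument coincides with the paper's, which runs it as a contrapositive.
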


\begin{rems}
\leavevmode\noprelistbreak
    \begin{enumerate}
    \item The implication ($\ref{Tcharacterization-CCA} \Rightarrow \ref{Tcharacterization-strong}$) can fail when $G$ is not abelian. (So \cref{ChromIso} can also fail when $G$ is not abelian.) Counterexamples can be constructed on the dihedral group of order $4n$, whenever $n$ is odd \cite[Cor.~5.4 and Prop.~5.6]{HKMM} (and on infinitely many other finite groups).
    \item The implication ($\ref{Tcharacterization-CCA} \Rightarrow \ref{Tcharacterization-index}$) is a special case of \cite[Prop.~2.5]{HKMM}, which does not require $G$ to be abelian.
    \end{enumerate}
\end{rems}

Note that combining \cref{ChromIso,Tcharacterization} yields the following conclusion.

\begin{cor} \label{AllIsoAff}
If\/ $\Cay(G_1; S_1)$ and $\Cay(G_2; S_2)$ are connected abelian Cayley graphs, and $\Cay(G_1; S_1)$ is chromatically isomorphic to $\Cay(G_2; S_2)$, then the following are equivalent:
	\begin{enumerate}
	\item Every chromatic isomorphism from $\Cay(G_1; S_1)$ to $\Cay(G_2; S_2)$ is affine.
    	\item $|G_1: \langle T(S_1,2s)\rangle|\leq 2$ for all $s\in S_1$ with $|s|=4$. 
   	\item $|G_2: \langle T(S_2,2t)\rangle|\leq 2$ for all $t\in S_2$ with $|t|=4$. 
	\end{enumerate}
\end{cor}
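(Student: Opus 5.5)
We deduce \cref{AllIsoAff} from \cref{ChromIso} and \cref{Tcharacterization}, using the fact that chromatic isomorphisms and affine maps are stable under composition and inversion. Fix a chromatic isomorphism $\varphi_0 \colon \Cay(G_1;S_1) \to \Cay(G_2;S_2)$. By \cref{ChromIso}, $\varphi_0 = \tau \compose \alpha$, where $\alpha$ is a colour-preserving automorphism of $\Cay(G_1;S_1)$ and $\tau \colon G_1 \to G_2$ is a group isomorphism. Then $\tau$ is itself a chromatic isomorphism, namely $\varphi_0 \compose \alpha^{-1}$. Since $\tau$ is a chromatic isomorphism onto $\Cay(G_2;S_2)$, it carries colour classes to colour classes. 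The colour class of $\{\pm s\}$ consists of the edges $\{g, g+s\}$, and $\tau$ sends these to the edges $\{\tau(g), \tau(g)+\tau(s)\}$. Comparing with $\Cay\bigl(G_2;\tau(S_1)\bigr)$ gives $S_2 = \tau(S_1)$.

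\textbf{(2)$\Leftrightarrow$(3).} The group isomorphism $\tau$ maps $S_1$ onto $S_2$. It preserves element orders and doubling, so $\tau\bigl(T(S_1,2s)\bigr) = T\bigl(S_2, 2\tau(s)\bigr)$. Hence the indices $|G_1 : \langle T(S_1,2s)\rangle|$ and $|G_2 : \langle T(S_2,2\tau(s))\rangle|$ agree. As $s$ runs over the elements of $S_1$ of order~$4$, $\tau(s)$ runs over the elements of $S_2$ of order~$4$, so (2) and (3) are equivalent.

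\textbf{(1)$\Leftrightarrow$(2).} By \cref{Tcharacterization}, (2) holds if and only if $\Cay(G_1;S_1)$ is strongly CCA, and also if and only if it is CCA. For (2)$\Rightarrow$(1), let $\varphi$ be any chromatic isomorphism. Then $\tau^{-1}\compose\varphi$ is a chromatic automorphism of $\Cay(G_1;S_1)$, that is, a colour-permuting automorphism. It is therefore affine because the graph is strongly CCA. Consequently $\varphi = \tau \compose (\tau^{-1}\compose\varphi)$ is affine, since a group isomorphism composed with an affine bijection is affine. For (1)$\Rightarrow$(2), let $\beta$ be any colour-preserving automorphism of $\Cay(G_1;S_1)$. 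Then $\tau\compose\beta$ is a chromatic isomorphism, hence affine by (1). So $\beta = \tau^{-1}\compose(\tau\compose\beta)$ is affine, the graph is CCA, and (2) follows from \cref{Tcharacterization}.

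There is no real obstacle here. The only points needing care are the routine checks that compositions and inverses of chromatic isomorphisms are chromatic, and of affine bijections are affine. The other detail is confirming $S_2 = \tau(S_1)$, which is needed to transport the index condition. All the substantive work is contained in \cref{ChromIso} and \cref{Tcharacterization}.
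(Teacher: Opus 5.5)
Your proposal is correct and is exactly the combination of \cref{ChromIso} and \cref{Tcharacterization} that the paper intends; the paper states the corollary without giving further detail. Your direct transport of (2)$\Leftrightarrow$(3) through the group isomorphism $\tau$ with $\tau(S_1)=S_2$ is sound. Alternatively, you could apply your (1)$\Leftrightarrow$(2) argument to the inverses of chromatic isomorphisms, which is the symmetric form of the same idea.
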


The following result shows there is a unique 2-generated, non-CCA, connected abelian Cayley graph (up to affine isomorphism). This graph was first pointed out by G.\,Verret \cite[Example~2.2]{HKMM}.

\begin{cor}\label{2genAbelNotCCA}
    Assume $\{a,b\}$ is a 2-element generating set of an abelian group~$G$. Then the Cayley graph 
    $\Cay \bigl( G; \{\pm a, \pm b \} \bigr)$ 
    is not CCA if and only if
    $G\cong \Z_4\times \Z_2$ 
    and
    $|a| = |b| = 4$.
\end{cor}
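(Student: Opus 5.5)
The plan is to deduce the result directly from \cref{Tcharacterization}, which says that $\Cay \bigl( G; \{\pm a, \pm b\} \bigr)$ is CCA if and only if $|G : \langle T(S,2s) \rangle| \le 2$ for every $s \in S = \{\pm a, \pm b\}$ with $|s| = 4$. Since $T(S,2s) = T(S, 2(-s))$, and the roles of $a$ and~$b$ are symmetric, it suffices to analyse the element $s = a$ under the assumption $|a| = 4$. We then compute $T(S, 2a)$. The elements $\pm a$ are excluded from it, because $2(\pm a) = 2a$. Whether $\pm b$ belong to it depends only on whether $2b = 2a$. This gives two cases.

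\emph{Case 1: $2b \neq 2a$.} Then $T(S,2a) = \{\pm b, 2a\}$, so $H = \langle b, 2a \rangle$. The quotient $G/H$ is generated by the image of~$a$, because $G = \langle a, b \rangle$. That image has order at most~$2$, since $2a \in H$. Hence $|G:H| \le 2$, and the condition in \cref{Tcharacterization} holds for~$s = a$.

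\emph{Case 2: $2b = 2a$.} Then $T(S,2a) = \{2a\}$, so $H = \langle 2a \rangle$ has order~$2$. The index condition therefore fails exactly when $|G| > 4$. Set $d = b - a$, so that $2d = 0$ and $G = \langle a, d \rangle$. This forces $|G| \le 8$, and $|G| > 4$ holds exactly when $d \notin \langle a \rangle$. In that situation $G = \langle a \rangle \oplus \langle d \rangle \iso \Z_4 \times \Z_2$. Also $|b| = 4$, because $2b = 2a \neq 0$ and $4b = 4a = 0$. This proves the forward direction: if the graph is not CCA, some $s$ lands in Case~2 with $|G| > 4$, so $G \iso \Z_4 \times \Z_2$ and $|a| = |b| = 4$.

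For the converse, suppose $G \iso \Z_4 \times \Z_2$ and $|a| = |b| = 4$. Every element of order~$4$ in $\Z_4 \times \Z_2$ has the form $(\pm 1, x)$, and its double is $(2,0)$. Hence $2a = 2b$, so we are in Case~2, and $|G : \langle 2a \rangle| = 8/2 = 4 > 2$. By \cref{Tcharacterization}, the graph is not CCA. No step is a serious obstacle; the only care needed is the bookkeeping that $\pm s$ give the same set $T(S,2s)$, and the small group-structure argument in Case~2 showing that $|G| > 4$ forces $G \iso \Z_4 \times \Z_2$.
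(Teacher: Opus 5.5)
Your proof is correct. The converse direction is the same as the paper's: every element of order $4$ in $\Z_4\times\Z_2$ doubles to $(2,0)$, so $\langle T(S,2s)\rangle$ has order $2$ and index $4$, and \cref{Tcharacterization} applies.

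The forward direction takes a different route. The paper does not use the index criterion there. It takes a non-affine colour-preserving automorphism $\varphi$, picks $g$ and $s,t\in\{a,b\}$ with $\pi^\varphi_g(s)\neq\pi^\varphi_{g+t}(s)$, and applies \cref{EltsOrder4} directly to get $2a=2b$, $|a|=|b|=4$ and $a\notin\langle b\rangle$. It then simply asserts that $\Z_4\times\Z_2$ is the only $2$-generated abelian group with these properties.

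You instead run both directions through condition (3) of \cref{Tcharacterization}, splitting on whether $2b=2a$. Since the paper's proof of (3)$\Rightarrow$(1) rests on \cref{EltsOrder4}, the underlying mechanism is the same.

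Your version gives a uniform treatment of both implications. It also explicitly verifies the group-identification step the paper leaves to the reader, via $d=b-a$ with $2d=0$ and $G=\langle a\rangle\oplus\langle d\rangle$ when $d\notin\langle a\rangle$. The paper's version is shorter and needs only the lemma, not the full characterization, for the forward implication.
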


The following result was known for abelian groups that are finite \cite[Prop.~4.1]{HKMM}, but the extension to infinite groups is new.

\begin{cor} \label{CCAiffFG}
A finitely generated abelian group~$G$ has a connected Cayley graph that is \underline{not} CCA \textup(or, equivalently, not strongly CCA\textup) if and only $G$ has a direct summand of the form\/ $\Z_4 \oplus \Z_2$ or\/ $\Z_{2^n} \oplus \Z_2 \oplus \Z_2$, with $n \ge 3$.
\end{cor}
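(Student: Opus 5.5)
By \cref{Tcharacterization}, $G$ has a connected Cayley graph that is not CCA if and only if there are a finite symmetric generating set~$S$ (we may take $S$ finite, since $G$ is finitely generated) and an element $s \in S$ with $|s| = 4$ such that $|G : \langle T(S,2s) \rangle| \ge 3$. The plan is to translate this condition into the existence of a direct summand of the stated form. Throughout, let $c = 2s$, which has order~$2$.

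\emph{($\Leftarrow$)} Suppose $G = H \oplus K$, where $H$ is $\Z_4 \oplus \Z_2$ or $\Z_{2^n} \oplus \Z_2 \oplus \Z_2$ with $n \ge 3$.

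First take $H = \Z_4 \oplus \Z_2$ and $a = (1,0)$, $b = (1,1)$. Both have order~$4$ and $2a = 2b = (2,0)$. Enlarge $\{\pm a, \pm b\}$ by adding $\pm k$ for a finite generating set of~$K$, and call the result~$S$. Put $s = a$ and $c = (2,0)$. Every element of~$S$ coming from~$K$ satisfies $2t \ne c$, because $2t \in K$. So $\langle T(S,c) \rangle = \langle c \rangle \oplus K$, which has index $4 \ge 3$.

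For the second form, write $H = \langle x \rangle \oplus \langle y \rangle \oplus \langle z \rangle$. Try $S_H = \{\pm x, \pm(x+y), \pm(2^{n-2}x + z)\}$. This generates~$H$ and $s = 2^{n-2}x+z$ has order~$4$, with $2s = 2^{n-1}x$. Since $n \ge 3$, we get $2x \ne 2^{n-1}x$ and $2(x+y) \ne 2^{n-1}x$. Hence $\langle T \rangle = \langle x, y, 2^{n-1}x \rangle \oplus K$, of index~$2$, which is not enough. So I would instead choose generators whose doubles all equal~$c$. For example, take $s_1 = 2^{n-2}x + z$, $s_2 = 2^{n-2}x + y + z$, and $s_3 = x + y$. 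The index is then $|H : \langle x + y, c \rangle| = 2^{n+2}/2^n = 4$. The real work here is to find such a choice and check that $\langle S_H \rangle = H$: we need $x$, $y$, $z$ to be recovered, and $x + y$ has order $2^n$ while $z = s_1 - 2^{n-2}x$. I expect some adjustment of these elements to be needed.

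\emph{($\Rightarrow$)} This is the main obstacle. Let $L = \langle T(S,c) \rangle$, which contains~$c$. The set $S \setminus T$ consists of elements $t$ with $2t = c$; all of them lie in the coset $s + \Omega_2$, where $\Omega_2$ is the $2$-torsion subgroup. Since $G = L + \langle S \setminus T \rangle$, we get $G/L$ generated by the images of $s + \omega$ for $\omega \in \Omega_2$. Each such image has order dividing~$2$, since $2(s+\omega) = c \in L$. So $G/L$ is an elementary abelian $2$-group of order $\ge 4$.

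I would then pass to the Sylow $2$-part~$P$ (or the torsion part) of~$G$. Since $c = 2s$ has order~$2$, write $c = 2^{m-1}g$ with $g$ of maximal height and order~$2^m$. Decompose~$P$ into cyclic summands adapted to~$c$, with $c$ in the socle of one summand $\langle x \rangle \iso \Z_{2^m}$, $m \ge 2$.

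The condition of index $\ge 4$ forces at least two independent order-$4$ elements $s, s'$ with $2s = 2s' = c$ and $s - s'$ not in~$L$. The element $s - s'$ has order~$2$, and it must lie outside $\langle c \rangle$ plus everything generated by $T$. From this I would extract a $\Z_2$ summand complementary to $\langle x \rangle$. If $m = 2$ we get a $\Z_4 \oplus \Z_2$ summand. If $m \ge 3$, then $s$ itself is not a multiple of a generator of $\langle x \rangle$ of the right form, so $s = 2^{m-2}x + w$ with $w$ a nontrivial involution. Together with $s - s'$, this should give two further $\Z_2$ summands, hence $\Z_{2^m} \oplus \Z_2 \oplus \Z_2$.

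The delicate point is the case $m \ge 3$ with only one extra $\Z_2$. There I would show that every order-$4$ element with double~$c$ lies in $\langle x, \text{that } \Z_2 \rangle$ modulo~$L$, forcing index $\le 2$.
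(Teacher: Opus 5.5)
Your ($\Leftarrow$) direction is correct, and your second choice of generators needs no adjustment. Indeed $s_2 - s_1 = y$, then $x = s_3 - y$ and $z = s_1 - 2^{n-2}x$, so these elements generate the summand. Since $c = 2^{n-1}x = 2^{n-1}(x+y)$, you get $\langle T(S,c)\rangle = \langle x+y\rangle \oplus K$, which has index~$4$.

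The gap is in ($\Rightarrow$), where the decisive steps are only announced: ``I would extract a $\Z_2$ summand complementary to $\langle x\rangle$'', ``this should give two further $\Z_2$ summands'', ``I would show \dots''. The one reason you do give, that $w \neq 0$ because ``$s$ is not a multiple of a generator of $\langle x\rangle$ of the right form'', is not an argument. Knowing only that the involution $s - s'$ lies outside $L$ says nothing about whether $\langle s-s'\rangle$ splits off, let alone together with $\langle x\rangle$.

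The missing observation is that $2G \subseteq L$. Every $t \in S$ either lies in $T(S,c)$ or satisfies $2t = c \in L$, so $2S \subseteq L$. Hence $G/L$ is a quotient of $G/2G$, and $s, s'$ are linearly independent modulo $2G$. In particular $s - s' \notin 2G$. When $m \ge 3$, the involutions $w = s - 2^{m-2}x$ and $w' = s' - 2^{m-2}x$ are congruent to $s$ and $s'$ modulo $2G$ (because $2^{m-2}x \in 2G$), so they are independent modulo $2G$. Your ``delicate case'' therefore never arises.

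You still need a splitting criterion. For example: if $c = 2^{m-1}x$ has height exactly $m-1$, and $w_1,\dots,w_k$ are involutions that are linearly independent modulo $2G$, then $\langle x\rangle \oplus \langle w_1\rangle \oplus \cdots \oplus \langle w_k\rangle$ is a direct summand of $G$. One proof: each element of order $2$ in this subgroup has the same height there as in $G$, so the subgroup is pure, and bounded pure subgroups are summands. The case $k=0$ is also what justifies your cyclic summand $\langle x\rangle$, which you currently just assert.

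With these two ingredients your split by the height of $2s$ works. If $m = 2$, use $s - s'$ to get $\Z_4 \oplus \Z_2$. If $m \ge 3$, use $w, w'$ to get $\Z_{2^m} \oplus \Z_2 \oplus \Z_2$. This has the bonus of telling you which summand occurs.

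The paper avoids heights and purity entirely, working in Corollary~\ref{CCAIffOplus}:
\begin{enumerate}
\item It takes a complement of $\langle \bar s, \bar t\rangle$ in the elementary abelian group $G/\langle T(S,2s)\rangle$ and pulls it back to $H$, so that $H \cap \langle s,t\rangle = \langle 2s\rangle$.
\item If some $h \in H$ has $2h = 2s$, then $G = H \oplus \langle s-h, t-h\rangle$.
\item Otherwise $H = H_0 \oplus \langle 2s\rangle$ and $G = H_0 \oplus \langle s,t\rangle$.
\item The structure theorem then finishes the finitely generated case.
\end{enumerate}
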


See \cref{CCAIffOplus} for a slightly more complicated version of \cref{CCAiffFG} that does not require the abelian group to be finitely generated.

\begin{ack}
This research was partially supported by grants from the Natural Science and Engineering Research Council of Canada.
\end{ack}

\section{Preliminaries} \label{PrelimSect}

\begin{defn}
If $\varphi \colon \Cay(G_1; S_1) \to \Cay(G_2; S_2)$ is an isomorphism of abelian Cayley graphs, then, for each $g \in G_1$, the map $\varphi$ sends the neighbours of~$g$ bijectively onto the neighbours of~$\varphi(g)$. Hence, there is a bijection $\pi^\varphi_g \colon S_1 \to S_2$, such that
	\[ \varphi(g + s) = \varphi(g) + \pi^\varphi_g(s) \quad \text{for all $s \in S_1$.} \]
\end{defn}

The following \lcnamecref{pi} presents some simple properties of~$\pi^\varphi_g$. (This paper requires groups to be abelian, but we remark that the proofs of \cref{pi,morehalfaffine} do not require that assumption.)

\begin{lem} \label{pi}
Assume $\varphi \colon \Cay(G_1; S_1) \to \Cay(G_2; S_2)$ is an isomorphism of connected abelian Cayley graphs. Then:
	\begin{enumerate}
   
	\item \label{pi-affineiff}
	$\varphi$ is affine if and only if $\pi_g^\varphi$ is independent of~$g$: i.e., $\pi_g^\varphi(s) = \pi_h^\varphi(s)$ for all $g,h \in G_1$ and $s \in S_1$.
	   
	\item \label{pi-presiff}
	$\varphi$ is a colour-preserving automorphism if and only if $G_1 = G_2$, $S_1 = S_2$, and $\pi_g^\varphi(s) \in \{s, -s \}$ for all $g\in G_1$ and $s \in S_1$.

	\item \label{pi-chromiff}
	$\varphi$ is a chromatic isomorphism if and only if $\pi^\varphi_h(s) \in \{ \pm \pi^\varphi_g(s)\}$ 
	for all $g,h \in G_1$ and all $s \in S_1$. 
    
	\item \label{pi-chrom}
	If $\varphi$ is a chromatic isomorphism, then for all $g\in G_1$, $s\in S_1$, and $k\in \Z$, we have:
		\begin{enumerate}
		
		\item \label{pi-chrom-order}
		$|\pi^\varphi_g(s)| = |s|$,
                        
		\item \label{pi-chrom-coset}
		$\pi^\varphi_{g + ks}(s) = \pi_g^\varphi(s)$,
		and

		\item \label{pi-chrom-ks}
		$\varphi(g + ks) = \varphi(g) + k \,\pi^\varphi_g(s)$.   

 		\end{enumerate}
 	\end{enumerate}
\end{lem}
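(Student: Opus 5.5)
We prove the four parts in order; each is a direct unpacking of the definitions plus a connectivity argument.

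\textbf{Part~(\ref{pi-affineiff}).} First suppose $\varphi(x) = \tau(x) + g_2$ is affine. Then $\varphi(g+s) = \varphi(g) + \tau(s)$, so $\pi^\varphi_g(s) = \tau(s)$ for every~$g$.

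Conversely, suppose $\pi^\varphi_g = \pi$ is independent of~$g$. Set $\psi(x) = \varphi(x) - \varphi(0)$. Then $\psi(0) = 0$ and $\psi(g + s) = \psi(g) + \pi(s)$ for all $g \in G_1$ and $s \in S_1$. We show $\psi$ is a homomorphism.
\begin{itemize}
\item Fix $h \in G_1$ and compare the maps $x \mapsto \psi(x+h)$ and $x \mapsto \psi(x) + \psi(h)$.
\item They agree at $x = 0$.
\item Both satisfy the same rule along edges: moving from $x$ to $x + s$ adds $\pi(s)$ in each case.
\item Since $S_1$ generates $G_1$ (connectedness), every $x$ is reached from~$0$ by a walk, and induction along the walk shows the two maps coincide.
\end{itemize}
Hence $\psi$ is a bijective homomorphism, that is, a group isomorphism, and $\varphi = \psi + \varphi(0)$ is affine.

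\textbf{Parts~(\ref{pi-presiff}) and~(\ref{pi-chromiff}).} These are restatements of the definitions.
\begin{itemize}
\item Colour-preserving means $\varphi(g+s) \in \{\varphi(g) \pm s\}$, i.e.\ $\pi_g^\varphi(s) \in \{\pm s\}$.
\item Chromatic means there is a single bijection $\pi$ with $\pi_g^\varphi(s) \in \{\pm\pi(s)\}$ for all~$g$. Given such a~$\pi$, any two $\pi_g^\varphi(s)$ and $\pi_h^\varphi(s)$ lie in $\{\pm\pi(s)\}$, so they agree up to sign.
\item Conversely, if they agree up to sign, take $\pi = \pi^\varphi_0$. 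This is a bijection $S_1 \to S_2$.
\end{itemize}

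\textbf{Part~(\ref{pi-chrom}).} This part carries the only real content, and we prove~(\ref{pi-chrom-coset}) first.

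Write $t = \pi^\varphi_g(s)$. We claim $\pi^\varphi_{g+s}(s) = t$.
\begin{itemize}
\item By~(\ref{pi-chromiff}), $\pi^\varphi_{g+s}(s) = \pm t$.
\item Since $S_1$ is symmetric, $-s \in S_1$, so $\varphi(g + s) + \pi^\varphi_{g+s}(-s) = \varphi(g) = \varphi(g+s) - t$. Thus $\pi^\varphi_{g+s}(-s) = -t$.
\item Suppose, for contradiction, that $\pi^\varphi_{g+s}(s) = -t$ with $t \ne -t$, i.e.\ $2t \neq 0$. Then the distinct elements $s$ and $-s$ would both map to $-t$ under the bijection $\pi^\varphi_{g+s}$, which is impossible.
\item If instead $2t = 0$, then $-t = t$ and there is nothing to prove.
\end{itemize}
So $\pi^\varphi_{g+s}(s) = t$. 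Applying the same argument with $-s$ in place of~$s$ (using $\pi^\varphi_g(-s) = -t$ by the same bijectivity reasoning) handles steps backwards. Induction on~$k$ then gives $\pi^\varphi_{g+ks}(s) = t$ for all $k \in \Z$.

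Part~(\ref{pi-chrom-ks}) follows from~(\ref{pi-chrom-coset}) by induction on~$k$ in both directions. Each step along $s$ from $g + ks$ adds $\pi^\varphi_{g+ks}(s) = t$. Stepping backwards by $-s$ adds $\pi^\varphi_{g+ks}(-s)$, and this equals $-t$ by the bijectivity argument above.

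Part~(\ref{pi-chrom-order}) then follows from~(\ref{pi-chrom-ks}) together with injectivity of~$\varphi$. We have $\varphi(g + ks) = \varphi(g) + kt$. So $ks = 0$ if and only if $\varphi(g+ks) = \varphi(g)$, which holds if and only if $kt = 0$. Hence $|s| = |t|$, including the infinite case.

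\textbf{Main obstacle.} The only delicate point is the sign consistency in~(\ref{pi-chrom-coset}), that is, ruling out a sign flip when $2t \ne 0$. This is settled by the bijectivity of $\pi^\varphi_{g+s}$ applied to the pair $\pm s$, as above.
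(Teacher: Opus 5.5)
There is a genuine gap in your proof of (\ref{pi-chrom-coset}), in the case $2s = 0$. There $s$ and $-s$ are the same element, so bijectivity of $\pi^\varphi_{g+s}$ gives no contradiction. In fact $g + 2s = g$ forces $\pi^\varphi_{g+s}(s) = \varphi(g) - \varphi(g+s) = -t$. So for an involution $s$, statement (\ref{pi-chrom-coset}) is \emph{equivalent} to $2t = 0$, which is the order-$2$ case of (\ref{pi-chrom-order}). Since you derive (\ref{pi-chrom-order}) from (\ref{pi-chrom-ks}), and hence from (\ref{pi-chrom-coset}), the reasoning is circular exactly here.

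The ingredients you use cannot close this gap: bijectivity of each $\pi^\varphi_g$, injectivity of $\varphi$, and the sign condition in (\ref{pi-chromiff}). Consider the map $(0,0) \mapsto 0$, $(1,0) \mapsto 1$, $(1,1) \mapsto 2$, $(0,1) \mapsto 3$ from $\Cay\bigl(\Z_2 \oplus \Z_2; \{(1,0),(0,1)\}\bigr)$ to $\Cay\bigl(\Z_4; \{\pm 1\}\bigr)$. It has all three properties, yet $\pi^\varphi_{(1,0)}\bigl((1,0)\bigr) = -1 \neq 1 = \pi^\varphi_{(0,0)}\bigl((1,0)\bigr)$ and $|(1,0)| = 2 \neq 4$. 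What rules this map out is that it merges two colour classes into one, so it does not respect the edge-colourings. (It does satisfy the literal bijection wording, with $(1,0) \mapsto 1$ and $(0,1) \mapsto -1$.) Your proof never uses this information.

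The missing step is the one the paper takes: prove (\ref{pi-chrom-order}) first, independently of (\ref{pi-chrom-coset}). Because $\varphi$ respects the colourings, it carries the set of \emph{all} edges of colour $\{\pm s\}$ onto the set of all edges of the single colour $\{\pm \pi^\varphi_g(s)\}$. So it restricts to an isomorphism $\Cay\bigl(G_1; \{\pm s\}\bigr) \to \Cay\bigl(G_2; \{\pm \pi^\varphi_g(s)\}\bigr)$. The components of these two graphs have $|s|$ and $|\pi^\varphi_g(s)|$ vertices respectively, so the orders agree.

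Then in (\ref{pi-chrom-coset}), a sign flip $\pi^\varphi_{g+s}(s) = -t \neq t$ gives $\varphi(g+2s) = \varphi(g)$. Hence $2s = 0$, hence $2t = 0$ by (\ref{pi-chrom-order}), a contradiction.

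For $2s \neq 0$, your bijectivity argument is the same computation as the paper's, since $\pi^\varphi_{g+s}(s) = \pi^\varphi_{g+s}(-s)$ if and only if $\varphi(g+2s) = \varphi(g)$. Your treatment of (\ref{pi-affineiff})--(\ref{pi-chromiff}) is fine. So is your derivation of (\ref{pi-chrom-ks}) and (\ref{pi-chrom-order}) once (\ref{pi-chrom-coset}) is known.
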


\begin{proof}
(\ref{pi-affineiff} $\Rightarrow$)
Write $\varphi(x) = \tau(x) + b$, where $\tau \colon G_1 \to G_2$ is a group isomorphism, and $b \in G_2$. Then
	\[ \varphi(g + s) 
	= \tau(g + s) + b 
	= \tau(g) + \tau(s) + b 
	= \bigl( \tau(g) + b \bigr)  + \tau(s)
	= \varphi(g) + \tau(s) , \]
so $\pi^\varphi_g(s) = \tau(s)$ is independent of~$g$.

\medbreak

(\ref{pi-affineiff} $\Leftarrow$)
By composing $\varphi$ with the translation $y \mapsto y - \varphi(0_{G_1})$, we may assume, without loss of generality, that $\varphi(0_{G_1}) = 0_{G_2}$. Then for all $x \in G_1$ and $s \in S_1$, we have
	\[ \pi^\varphi_x(s) = \pi^\varphi_0(s) = \varphi(s) . \]
Therefore, for all $s_1,\ldots,s_n \in S_1$, if we let $g = s_1 + s_2 + \cdots + s_{n-1}$, then
	\[ \varphi( s_1 + s_2 + \cdots + s_n )
		= \varphi(g) + \pi^\varphi_g(s_n)
		= \varphi(g) + \varphi(s_n) 
		= \varphi(s_1 + s_2 + \cdots + s_{n-1}) + \varphi(s_n) 
		. \]
By induction on~$n$, we conclude that 
	\[ \varphi(s_1 + s_2 + \cdots + s_n ) = \varphi(s_1) + \varphi(s_2) + \cdots + \varphi(s_n) . \]
Since $S_1$ generates~$G_1$, it follows easily from this that $\varphi$ is a group homomorphism, and hence $\varphi$ is affine.

\medbreak

\pref{pi-presiff}
Immediate from the definitions.

\medbreak

\pref{pi-chromiff}
Immediate from the definitions.

\medbreak

\pref{pi-chrom-order}
Since $\varphi$ is a chromatic isomorphism, it maps all of the edges of a single colour in $\Cay( G_1; S_1)$ to all of the edges of a certain colour in $\Cay( G_2; S_2)$. More precisely, $\varphi$~induces an isomorphism from $\Cay \bigl( G_1; \{\pm s\} \bigr)$ to $\Cay \bigl( G_2; \{\pm \pi^\varphi_g(s) \} \bigr)$, for each $s \in S_1$. Each component of the first graph has $|s|$ vertices, and each component of the second graph has $|\pi^\varphi_g(s)|$ vertices, so we must have $|s| = |\pi^\varphi_g(s)|$.

\medbreak

\pref{pi-chrom-coset}
It suffices to show $\pi^\varphi_{g + s}(s) = \pi^\varphi_g(s)$ for all~$g$ and~$s$, for then the map $g \mapsto \pi^\varphi_g$ must be constant on every orbit of the translation by~$s$, which means $\pi^\varphi_{g + ks}(s) = \pi^\varphi_g(s)$ for all $k \in \Z$.
Since $\varphi$ is a chromatic isomorphism, we have $\pi^\varphi_{g + s}(s) \in \{\pm \pi^\varphi_g(s) \}$. Therefore, if $\pi^\varphi_{g + s}(s) \neq \pi^\varphi_g(s)$, then we must have 
	\[ \pi^\varphi_{g + s}(s) = -\pi^\varphi_g(s) \neq \pi^\varphi_g(s) . \]
So
	\[ \varphi(g + 2s) 
	= \varphi(g + s) + \pi^\varphi_{g + s}(s)
	= \varphi(g + s) - \pi^\varphi_g(s)
	= \bigl( \varphi(g) + \pi^\varphi_g(s) \bigr) - \pi^\varphi_g(s)
	= \varphi(g)
	. \]
Since $\varphi$ is an isomorphism (hence injective), this implies $2s = 0$. So $2 \pi^\varphi_g(s) = 0$ by~\pref{pi-chrom-order}, which contradicts the fact that $-\pi^\varphi_g(s) \neq \pi^\varphi_g(s)$.

\medbreak

\pref{pi-chrom-ks}
The desired conclusion is obvious for $k = 0$.
We know from \pref{pi-chrom-coset} that 
	\[ \varphi(g + ks) = \varphi \bigl( g + (k - 1) s \bigr) + \pi^\varphi_g(s) , \]
so the desired conclusion can be proved by induction for $k > 0$.

Hence, for $k > 0$, we have
	\[ \varphi(g)
	= \varphi \bigl( (g - ks) + ks \bigr) 
	= \varphi( g - ks) + k \, \pi^\varphi_{g - ks}(s) 
	= \varphi(g - ks) + k \,  \pi^\varphi_{g}(s) 
	, \]
so $\varphi(g - ks) = \varphi(g) - k \, \pi^\varphi_{g}(s)$, which means that the desired conclusion is also true for negative~$k$.
\end{proof}

We will also use the following three 
observations.

\begin{lem}[{\cite[Lem.~2.7]{XiaZZ}}] \label{morehalfaffine}
Assume $\varphi \colon G \to G$ is an affine bijection. If the set of fixed points of~$\varphi$ is nonempty, then it is a coset of a subgroup of~$G$.
\end{lem}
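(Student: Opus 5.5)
Write $\varphi(x) = \tau(x) + b$, where $\tau \colon G \to G$ is a group automorphism and $b \in G$; this is possible by the definition of an affine bijection. Let
	\[ F = \{\, x \in G \mid \varphi(x) = x \,\} \]
be the set of fixed points, and assume $F \neq \emptyset$. The plan is to fix one fixed point $x_0 \in F$ and show that $F - x_0$ equals the fixed-point subgroup of~$\tau$, namely
	\[ H = \{\, y \in G \mid \tau(y) = y \,\} . \]
This set is a subgroup because $\tau$ is a homomorphism: $\tau(0) = 0$, and if $\tau(y) = y$ and $\tau(z) = z$, then $\tau(y - z) = \tau(y) - \tau(z) = y - z$. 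It then follows that $F = x_0 + H$ is a coset.

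The key step is the computation relating fixed points of~$\varphi$ to fixed points of~$\tau$. Since $\varphi(x_0) = x_0$, we have $b = x_0 - \tau(x_0)$. Therefore, for any $x \in G$,
	\[ \varphi(x) = \tau(x) + x_0 - \tau(x_0) = \tau(x - x_0) + x_0 . \]
Consequently, $\varphi(x) = x$ if and only if $\tau(x - x_0) = x - x_0$, which says exactly that $x - x_0 \in H$. Hence $F = x_0 + H$.

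There is no real obstacle here; the only point requiring care is to use the homomorphism property of~$\tau$ when substituting for~$b$. Bijectivity of~$\varphi$ is not actually needed. If one preferred to avoid commutativity (the paper remarks that the argument works for nonabelian groups as well), the same computation goes through with $\varphi(x) = \tau(x)\,b$ and the coset written as $H x_0$ or $x_0 H$ as appropriate.
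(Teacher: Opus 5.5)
Your proposal is correct and follows essentially the same route as the paper: both take the fixed-point subgroup of $\tau$ and show that the fixed-point set of $\varphi$ is its coset through any one fixed point. Your rewriting $\varphi(x) = \tau(x - x_0) + x_0$ simply handles both inclusions in one step, where the paper checks them separately.
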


\begin{proof}
We have $\varphi(x) = \tau(x) + b$, for some group automorphism $\tau$ of~$G$, and some $b \in G$. Let $F$ be the set of all elements of~$G$ that are fixed by~$\tau$. This is a subgroup of~$G$.

Now, if $g$ and~$h$ are fixed by~$\varphi$, then
	\[ g - h = \varphi(g) - \varphi(h) = \bigl( \tau(g) + b \bigr) - \bigl( \tau(h) + b \bigr) = \tau(g - h) , \]
so $g - h \in F$, which means $g \in h + F$. Conversely, a simple calculation shows that every element of the coset $h + F$ is fixed by~$\varphi$.
\end{proof}

\begin{lem} \label{complement}
Assume $H$ and $K$ are subgroups of~$G$. If $G \smallsetminus H$ is a coset of~$K$, then $|G : H| = 2$ \textup(and $H = K$\textup).
\end{lem}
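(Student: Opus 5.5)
The plan is to show first that $H$ is a proper subgroup, then that $K = H$, and finally that $G$ splits into just the two cosets $H$ and $K$. Write $G \smallsetminus H = g + K$ for some $g \in G$. Since a coset is nonempty, $H \neq G$, and so $g \notin H$.

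The first goal is to prove $K \subseteq H$. Suppose instead that some $k \in K$ lies outside $H$. Then $k \in G \smallsetminus H = g + K$, so $g \in k - K = K$. It follows that $g + K = K$, which means $G \smallsetminus H = K$. But $0 \in K$, whereas $0 \in H$ means $0 \notin G \smallsetminus H$. This contradiction gives $K \subseteq H$.

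Next, translation by any element of $K$ preserves the coset $g + K$, and hence also its complement $H$. Therefore $H$ is a union of cosets of $K$. Since $G \smallsetminus H$ is a single coset of $K$, the group $G$ is the union of the $K$-cosets contained in $H$ together with the one extra coset $g + K$. Consequently $|G:K| = |H:K| + 1$ whenever these indices are finite. To avoid relying on finiteness, I will argue with elements instead. Take any $h \in H$. If $h + g \in H$, then $g = (h+g) - h \in H$, a contradiction. So $h + g \in g + K$, which gives $h \in K$. Hence $H \subseteq K$, and therefore $H = K$.

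Finally, $G = H \sqcup (g + H)$ is the disjoint union of two cosets of $H$, so $|G : H| = 2$. No step here is a real obstacle. The only point needing care is to carry out the index argument element-wise, as above, so that it also works when $G$ is infinite.
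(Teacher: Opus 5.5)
Your proof is correct and follows essentially the same route as the paper. Both arguments get $K \subseteq H$ because $0 \in H$ while $G \smallsetminus H$ is a single $K$-coset, and then compare the cosets $g+K$ and $g+H$. The paper notes $g+K \subseteq g+H$ to get $|G:H|=2$ first and then reads off $H=K$, whereas you show $g+H \subseteq g+K$ to get $H=K$ first; like yours, the paper's argument needs no finiteness.
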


\begin{proof}
Since $G \smallsetminus H$ is a coset of~$K$, we know that $H$ (the complement of $G \smallsetminus H$) is a union of cosets of~$K$, so $K \subseteq H$. Therefore, the coset $G \smallsetminus H$ of~$K$ is contained in a single coset of~$H$, so $|G : H| = 2$. (Furthermore, we must have $H = K$, because $G \smallsetminus H$ is a coset of both $H$ and~$K$.) 
\end{proof}

\begin{lem} \label{ChromIsoFG}
If \cref{ChromIso} is valid under the additional assumption that $S_1$ is finite, then it is also valid without this assumption.
\end{lem}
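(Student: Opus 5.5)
The plan is to reduce \cref{ChromIso} to the existence of a group isomorphism that agrees, up to sign, with the colour bijection. Then I would build that isomorphism by a compactness argument over finite subsets of~$S_1$.

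First the reduction. Let $\varphi \colon \Cay(G_1;S_1) \to \Cay(G_2;S_2)$ be a chromatic isomorphism. Composing with a translation of~$G_2$, which is itself a group isomorphism composed with a translation and is harmless, we may assume $\varphi(0) = 0$. By \fullcref{pi}{chromiff}, there is a bijection $\pi \colon S_1 \to S_2$ with $\pi^\varphi_g(s) \in \{\pm \pi(s)\}$ for all $g$ and~$s$.

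I claim the conclusion of \cref{ChromIso} holds for $\varphi$ if and only if there is a group isomorphism $\tau \colon G_1 \to G_2$ with $\tau(s) \in \{\pm \pi(s)\}$ for all $s \in S_1$.
\begin{itemize}
\item If such a $\tau$ exists, then $\tau^{-1} \compose \varphi$ sends each edge of colour $\{\pm s\}$ to an edge of colour $\{\pm \tau^{-1}\pi(s)\} = \{\pm s\}$. So it is a colour-preserving automorphism~$\alpha$, and $\varphi = \tau \compose \alpha$.
\item Conversely, if $\varphi = \tau \compose \alpha$ with $\alpha$ colour-preserving, then $\pi^\varphi_g(s) = \tau\bigl(\pi^\alpha_g(s)\bigr) = \pm\tau(s)$ by \fullcref{pi}{presiff}. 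Hence $\tau(s) = \pm \pi(s)$.
\end{itemize}

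Next, the finite pieces. Let $F \subseteq S_1$ be a finite symmetric subset and let $H_F = \langle F\rangle$. By \fullcref{pi}{chrom-ks}, $\varphi$ maps the connected component of~$0$ in $\Cay(G_1;F)$, namely $H_F$, onto the component of $0$ in $\Cay\bigl(G_2;\pi(F)\bigr)$, namely $\langle \pi(F)\rangle$. Hence $\varphi|_{H_F}$ is a chromatic isomorphism $\Cay(H_F;F) \to \Cay\bigl(\langle\pi(F)\rangle;\pi(F)\bigr)$ between connected Cayley graphs with finite connection sets.

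By hypothesis and the reduction above, there is a group isomorphism $\tau_F \colon H_F \to \langle \pi(F)\rangle$ with $\tau_F(s) = \pm\pi(s)$ for $s \in F$. Let $\Sigma$ be the set of sign functions $\varepsilon \colon S_1 \to \{\pm1\}$, made into a compact space as $\{\pm1\}^{S_1}$ by Tychonoff. Let $C_F \subseteq \Sigma$ be the set of those $\varepsilon$ for which the assignment $s \mapsto \varepsilon_s \pi(s)$ on~$F$ extends to a group isomorphism $H_F \to \langle\pi(F)\rangle$. That extension is unique if it exists, and it automatically forces $\varepsilon_{-s}\pi(-s) = -\varepsilon_s\pi(s)$.

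Each $C_F$ depends only on the finitely many coordinates in~$F$, so it is closed. It is nonempty by the finite case. Moreover $F \subseteq F'$ implies $C_{F'} \subseteq C_F$, since restricting the isomorphism $H_{F'} \to \langle\pi(F')\rangle$ to $H_F$ gives an isomorphism onto $\langle\pi(F)\rangle$. Thus the family $\{C_F\}$ has the finite intersection property, and compactness yields $\varepsilon \in \bigcap_F C_F$.

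Finally, the gluing. Define $\tau = \bigcup_F \tau_F^{\varepsilon}$, where $\tau_F^{\varepsilon}$ is the isomorphism determined by $\varepsilon$ on~$H_F$. These maps are compatible by uniqueness, and $G_1 = \bigcup_F H_F$ by connectedness, so $\tau$ is a well-defined homomorphism.
\begin{itemize}
\item It is injective, because each $\tau_F^\varepsilon$ is.
\item It is surjective, because $G_2 = \bigcup_F \langle\pi(F)\rangle$, using that $\pi$ is onto $S_2$ and $\Cay(G_2;S_2)$ is connected.
\end{itemize}
By the reduction, $\tau$ gives the desired factorization. The main point needing care is the closedness and nonemptiness of the sets $C_F$, specifically the check that the restriction of $\varphi$ really lands on $\langle\pi(F)\rangle$ as a chromatic isomorphism. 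If $S_1$ is countable, one could instead replace Tychonoff by K\H{o}nig's lemma along an increasing chain of finite subsets.
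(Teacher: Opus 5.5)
Your argument is correct. It is a compactness argument like the paper's, but it compactifies a different object.

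The paper encodes the unknown colour-preserving automorphism $\psi$ itself. It uses a propositional variable $X_g^h$ for every pair $(g,h) \in G_1 \times G_1$, together with sentences forcing $\psi$ to be a bijection fixing $0$, to preserve colours in both directions, and to make $\varphi \circ \psi$ additive. Finite satisfiability then comes from the finite case applied to $\varphi|_{\langle F_1 \rangle}$.

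You instead first prove a reformulation: $\varphi$ factors as required if and only if some group isomorphism $\tau$ satisfies $\tau(s) \in \{\pm \pi(s)\}$ for all $s \in S_1$. This shrinks the unknown to a sign vector in $\{\pm 1\}^{S_1}$. Each constraint set $C_F$ is then a cylinder set depending on finitely many coordinates. The final gluing is automatic, because a homomorphism is determined by its values on generators.

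The paper's encoding needs no preliminary reformulation, but it is much larger, and its finite-satisfiability check needs some unstated bookkeeping. In particular, $F_1$ must contain every generator mentioned, and variables are set false outside $\langle F_1 \rangle$. Your version makes both the compactness step and the assembly of $\tau$ transparent. It also shows that whether $\varphi$ factors depends only on the colour bijection $\pi$. Logically the two routes have the same strength: Tychonoff for $\{\pm 1\}^{S_1}$ and propositional compactness are equivalent.

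Two small points should be tidied.
\begin{itemize}
\item \fullcref{pi}{chrom-ks} only gives $\varphi(H_F) \subseteq \langle \pi(F) \rangle$. Equality needs the same argument applied to $\varphi^{-1}$, which is chromatic with colour bijection $\pi^{-1}$; this uses $\pi(-s) = -\pi(s)$, which holds.
\item The initial translation is harmless for a specific reason: $T_b \circ \tau = \tau \circ T_{\tau^{-1}(b)}$, and translations of the abelian group $G_1$ are colour-preserving automorphisms of $\Cay(G_1; S_1)$. It is not because the translation is ``a group isomorphism composed with a translation.''
\end{itemize}
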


\begin{proof}
This is a straightforward compactness argument.
Given a chromatic isomorphism $\varphi$ from $\Cay(G_1; S_1)$ to $\Cay(G_2; S_2)$, with $\varphi(\0_{G_1}) = \0_{G_2}$, it suffices to find a colour-preserving automorphism~$\psi$ of $\Cay(G_1; S_1)$, with $\psi(\0_{G_1}) = \0_{G_1}$, such that the composition $\varphi \compose \psi$ is a group isomorphism.
We encode $\psi$ as the solution of a set of sentences of Propositional Logic: for each ordered pair $(g,h) \in G_1 \times G_1$, we have a Boolean variable $\var{g}{h}$, and we encode the fact that $\psi(g) = h$ by setting $\var{g}{h}$ true. Here are the sentences that must be satisfied:
	\begin{itemize}
	
	\item $\psi(\0) = \0$:
		\[ \var \0 \0 . \]
		
	\item $\psi$ is a well-defined injective function on its domain:
		\[ \bigl( \var g y \Rightarrow \neg \var g z \bigr)
			\and \bigl( \var y g \Rightarrow \neg \var z g \bigr)
			\qquad \text{for all $g,y,z \in G_1$ with $y \neq z$} . \]
	
	\item $\psi$ preserves edge colours and its domain is all of~$G_1$:
		\[ \var g h \Rightarrow \bigl( \var{g+s}{h+s} \vee \var{g+s}{h-s} \bigr)
			\qquad \text{for all $g,h \in G_1$ and $s \in S_1$} . \]
	
	\item $\psi$ is surjective (and its inverse preserves edge colours):
		\[ \var g h \Rightarrow \bigl( \var{g+s}{h+s} \vee \var{g-s}{h+s} \bigr)
			\qquad \text{for all $g,h \in G_1$ and $s \in S_1$} . \]
	
	\item $\varphi \compose \psi$ respects addition (so it is a group isomorphism):
		\[ \bigl( \var{g_1}{\varphi^{-1}(g_2)} \and \var{h_1}{\varphi^{-1}(h_2)})
			\Rightarrow \var{g_1 + h_1}{\varphi^{-1}(g_2 + h_2)}
			\qquad \text{for all $g_1, h_1 \in G_1$ and $g_2, h_2 \in G_2$} . \]
	\end{itemize}

If $F_1$ is any finite, symmetric subset of~$S_1$, and we let $F_2 = \varphi(F_1)$, then the restriction~$\varphi'$ of~$\varphi$ to $\langle F_1 \rangle$ is a chromatic isomorphism from $\Cay \bigl( \langle F_1 \rangle; F_1 \bigr)$ to $\Cay \bigl( \langle F_2 \rangle; F_2 \bigr)$ that sends $\0_{G_1}$ to~$\0_{G_2}$.
Our hypothesis implies that $\varphi'$ is the composition of 
	\begin{itemize}
	\item a colour-preserving automorphism of $\Cay \bigl( \langle F_1 \rangle; F_1 \bigr)$ that fixes~$\0_{G_1}$, 
	and 
	\item a group isomorphism from $\langle F_1 \rangle$ to $\langle F_2 \rangle$. 
	\end{itemize}
Therefore, every finite subset of the above collection of sentences is satisfiable, so the Compactness Theorem of Propositional Logic \cite[A.1.4.2, p.~26]{HandbookLogic} implies that the entire collection of sentences is satisfiable. This means that the desired colour-preserving automorphism~$\psi$ exists.
\end{proof}

\section{Proofs}

Recall that $T(S, 2s)$ is defined in the statement of \fullcref{Tcharacterization}{index}. 
The following result is our main technical tool.  It (together with \fullcref{pi}{chrom-coset}) implies that $\pi^\varphi_{g+h}(s) = \pi^\varphi_g(s)$ for all $h \in \langle T(S, 2s) \rangle$.

\begin{lem}\label{EltsOrder4}
Assume 
	\begin{itemize}
	\item $G_1$ and $G_2$ are abelian,
	\item $\varphi$ is a chromatic isomorphism from $\Cay(G_1;S_1)$ to $\Cay(G_2; S_2)$,
	and
	\item $\pi^\varphi_{g+t}(s) \neq \pi^\varphi_g(s)$ \textup(with $g \in G_1$ and $s,t \in S_1$\textup).
	\end{itemize} 
Then $|s| = |t| = 4$, $2s = 2t$, and $\langle s \rangle \cap \langle t \rangle = \langle 2s \rangle$.

Furthermore, we have $\pi^\varphi_{g+t}(s) =- \pi^\varphi_g(s)$ and $\pi^\varphi_{g+s}(t) = -\pi^\varphi_{g}(t) \neq \pi^\varphi_{g}(t)$.
\end{lem}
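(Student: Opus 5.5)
Throughout, write $\pi_x$ for $\pi^\varphi_x$. Set $a = \pi_g(s)$, $b = \pi_g(t)$, $a' = \pi_{g+t}(s)$ and $b' = \pi_{g+s}(t)$. By \fullcref{pi}{chromiff} we have $a' = \pm a$, so the hypothesis $a' \neq a$ forces $a' = -a$ with $a \neq -a$. By \fullcref{pi}{chrom-order} we get $|a| = |s|$, hence $2s \neq 0$. Similarly $b' = \pm b$.

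The plan is to compute $\varphi(g+s+t)$ in two ways, going around the ``square'' $g, g+s, g+t, g+s+t$. Via $g+t$ we get
\[
\varphi(g+s+t) = \varphi(g) + b - a .
\]
Via $g+s$ we get
\[
\varphi(g+s+t) = \varphi(g) + a + b'.
\]
Hence $b' = b - 2a$.

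\textbf{Case $b' = b$.} Then $2a = 0$, which is a contradiction.

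\textbf{Case $b' = -b \neq b$.} Then $2b = 2a$, which gives $\pi_{g+s}(t) = -\pi_g(t) \neq \pi_g(t)$. It also gives $|t| = |b| = |b'|$ with $2b \neq 0$.

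Next we would establish the order and intersection claims by pulling information back through $\varphi$, using \fullcref{pi}{chrom-ks} along both colours. The idea is to exploit the symmetry just obtained: the pair $(t,s)$ at $g$ satisfies the same hypothesis as $(s,t)$. Applying \fullcref{pi}{chrom-coset} repeatedly, the functions $\pi_x(s)$ and $\pi_x(t)$ flip sign under translation by $t$ and by $s$ respectively, and are invariant under their own generator.

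Now consider $x = g + ks$. We have $\pi_x(t) = \pi_g(t)$ by \fullcref{pi}{chrom-coset} applied to $s$? No: that coset invariance concerns $\pi(s)$ along $s$. Instead, redo the square argument at the base point $g+s$. Since $\pi_{g+s}(s) = a$ and $\pi_{g+s}(t) = -b$, the same computation shows $\pi_{g+s+t}(s) = -a$ or $+a$, and the consistency relation again requires $2a = 2b$. So the sign pattern propagates, and
\[
\pi_{g+ks}(t) = (-1)^k b .
\]
Since $\pi_{g+|s|s}(t) = \pi_g(t)$, and $b \neq -b$, it follows that $|s|$ is even. Symmetrically, $|t|$ is even.

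For the exact value, use injectivity. We have
\[
\varphi(g+2s) = \varphi(g) + 2a = \varphi(g) + 2b = \varphi(g+2t),
\]
so $2s = 2t$. Then $t - s$ has order dividing $2$, and $\langle s\rangle \cap \langle t\rangle \supseteq \langle 2s\rangle$.

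To force $|s| = 4$, compare $\varphi(g+s+t)$ computed as above with $\varphi(g + s - t + 2t)$, or more directly use $t = s + c$ with $2c = 0$. The element $g + s + t = g + 2s + c$ can be reached along an $s$-path of length $2$ followed by a $c$-step. That route is not directly available, so instead iterate the square: walking $s, t, s, t$ from $g$ returns to $g + 2s + 2t = g + 4s$. Tracking $\varphi$ with the flipping signs gives $\varphi(g) + a + (-b) + \dots$. I expect the alternating signs to yield $\varphi(g+2s+2t) = \varphi(g)$, forcing $4s = 0$, hence $|s| = |t| = 4$ since $2s \neq 0$.

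Finally, $\langle s\rangle \cap \langle t\rangle$ cannot contain $s$: otherwise $t \in \{\pm s\}$ up to order $4$, and the colour of $t$ would equal that of $s$, contradicting $b \neq \pm a$ (here $b = \pm a$ is excluded because $\pi_g$ is a bijection and $s \neq t$). Thus the intersection is $\langle 2s\rangle$.

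The main obstacle is the sign bookkeeping in the closed walk that yields $4s = 0$. I would handle it by carefully applying the relation $b' = b - 2a$ at each successive base point.
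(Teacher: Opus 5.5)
Your opening square computation is the same as the paper's and is correct. It gives $\pi_{g+s}(t)=b-2a$, hence $\pi_{g+s}(t)=-b\neq b$ and $2a=2b$. Then $\varphi(g+2s)=\varphi(g+2t)$ yields $2s=2t$.

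\textbf{Gap: the order claim.} You never prove $4s=0$. You only say you \emph{expect} the closed walk $s,t,s,t$ to return to $\varphi(g)$, and the bookkeeping you postpone is the entire content of that step. Your justification of $\pi_{g+ks}(t)=(-1)^k b$ is also wrong as written. Redoing the square at base point $g+s$ is legitimate: the hypothesis holds there, because $\pi_{g+s+t}(s)=\pi_{g+t}(s)=-a\neq a=\pi_{g+s}(s)$ by \fullcref{pi}{chrom-coset}. But what it yields is $\pi_{g+2s}(t)=b$ together with $2\pi_{g+s}(s)=2\pi_{g+s}(t)$, i.e.\ $2a=-2b$, not $2a=2b$. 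Combined with $2a=2b$, this is exactly the missing relation $4a=0$.

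The paper gets this directly. Since $3s=s+2t$, \fullcref{pi}{chrom-ks} gives
\[ \varphi(g)+3a=\varphi(g+3s)=\varphi(g+s+2t)=\varphi(g+s)+2\pi_{g+s}(t)=\varphi(g)+a-2b=\varphi(g)-a , \]
so $4a=0$. Hence $|s|=|a|=4$ (as $2a\neq 0$), and $|t|=4$ (as $2t=2s$). Once $\pi_{g+2s}(t)=b$ is properly justified, your walk also closes, since its increments are $a,-b,-a,b$. Your parity argument via $\pi_{g+|s|s}(t)$ presupposes $|s|<\infty$ and is not needed.

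\textbf{Gap: the intersection claim.} To rule out $t=\pm s$ you invoke $b\neq\pm a$, citing bijectivity of $\pi_g$ and $s\neq t$. This is unjustified for three reasons:
\begin{itemize}
\item $s\neq t$ was never established.
\item Bijectivity of $\pi_g$ alone only gives $b\neq a$.
\item Since $\pi_g(-s)=-\pi_g(s)$ (by \fullcref{pi}{chrom-ks} with $k=-1$), the condition $b\neq\pm a$ is equivalent to $t\neq\pm s$, so the argument presupposes what it is meant to show.
\end{itemize}
The correct argument, as in the paper, is \fullcref{pi}{chrom-coset}: if $t\in\langle s\rangle$, then $\pi_{g+t}(s)=\pi_g(s)$, contradicting the hypothesis. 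Together with $|s|=|t|=4$ and $2s=2t$, this gives $\langle s\rangle\cap\langle t\rangle=\langle 2s\rangle$.
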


\begin{proof}
For convenience, let $\hat s = \pi_g^\varphi(s)$ and
$\hat t = \pi_g^\varphi(t)$, so
    \[ 
    \varphi(g + s) = \varphi(g) + \hat s
    \text{ and }
    \varphi(g + t) = \varphi(g) + \hat t.
    \]
Since $\pi^\varphi_{g+t}(s) \neq \pi^\varphi_g(s)$, but $\varphi$ is a chromatic isomorphism, we must have $\pi^\varphi_{g+t}(s) = -\hat s$. Therefore
	\begin{align*}
    \varphi(g) + \hat s + \pi^\varphi_{g+s}(t)
	&= \varphi(g + s) + \pi^\varphi_{g+s}(t)
	= \varphi(g + s  + t)
	\\&= \varphi(g + t  + s)
	= \varphi(g + t) + \pi^\varphi_{g+t}(s)
	= \varphi(g) + \hat t - \hat s
	, \end{align*}
so
	\[ \pi^\varphi_{g+s}(t) = \hat t - 2 \hat s . \]
Since $-\hat s = \pi^\varphi_{g+t}(s) \neq \pi^\varphi_g(s) = \hat s$, we know $2 \hat s \neq 0$, so we have $\pi^\varphi_{g+s}(t) \neq \hat t$, and hence $\pi^\varphi_{g+s}(t) = -\hat t$. So $-\hat t = \hat t - 2 \hat s$, which implies $2 \hat s = 2 \hat t$.

Also, we have 
	\[ \varphi(g + 2s) = \varphi(g) + 2 \hat s = \varphi(g) + 2 \hat t = \varphi(g + 2t) . \]
Since $\varphi$ is a bijection, this implies $2s = 2t$. 

Now
	\begin{align*}
	\varphi(g) + 3 \hat s
	&= \varphi(g + 3s)
	= \varphi(g + s + 2s)
	= \varphi(g + s + 2t)
	= \varphi(g + s)  + 2 \pi^\varphi_{g+s}(t)
	\\
	&= \varphi(g + s)  - 2 \hat t
	= \varphi(g + s)  - 2 \hat s
	= \bigl( \varphi(g) + \hat s \bigr) - 2 \hat s
	= \varphi(g) - \hat s
	. \end{align*}
So $4 \hat s = 0$. Since it was mentioned above that $2 \hat s \neq 0$, we conclude that $|\hat s| = 4$. 
As $2 \hat s = 2 \hat t$, this implies $|\hat t| = 4$. So $|s| = |t| = 4$ \fullcsee{pi}{chrom-order}.

Furthermore, since $\pi^\varphi_{g+t}(s) \neq \pi^\varphi_{g}(s)$, we know from \fullcref{pi}{chrom-coset} that $t \notin \langle s \rangle$. Since $|s| = |t| = 4$ and $2s = 2t$, it follows that $\langle s \rangle \cap \langle t \rangle = \langle 2s \rangle$.
\end{proof}

\begin{proof}[\bf Proof of \cref{ChromIso}]
Assume, for simplicity, that $S_1$ (and hence also $S_2$) is finite (so the groups $G_1$ and~$G_2$ are finitely generated).  \Cref{ChromIsoFG} explains that this special case implies the full result.

Let $\varphi$ be a chromatic isomorphism. By composing with the translation $x \mapsto x - \varphi(0_{G_1})$, we may assume, without loss of generality, that $\varphi(\0_{G_1}) = \0_{G_2}$. (Therefore $\pi^\varphi_0(s) = \varphi(s)$ for all $s \in S_1$.) For each $s \in S_1$, let 
	\[ A(\varphi, s) = \{\, g \in G_1 \mid \varphi(g + s) = \varphi(g) + \varphi(s) \,\} 
		= \{\, g \in G_1 \mid \pi^\varphi_g(s) = \pi^\varphi_0(s) \,\}. \]
Also let
	\[ \sgood{\varphi} = \{\, s \in S_1 \mid A(\varphi, s) = G_1 \,\} . \]
We wish to find a colour-preserving automorphism~$\alpha$ of $\Cay(G_1; S_1)$, such that $\alpha(0_{G_1}) = 0_{G_1}$ and $\sgood{\varphi \compose \alpha} = S_1$ (cf.\ \fullcref{pi}{affineiff}).

We may assume there is some $s \in S_1$, such that $s \notin \sgood{\varphi}$. Then (since $\Cay(G_1; S_1)$ is connected), there exist $h \in G_1$ and $t \in S_1$, such that 
	\[ \text{$\pi^\varphi_{h}(s) \neq \varphi(s)$ \ but \ $\pi^\varphi_{h+t}(s) = \varphi(s)$} . \]
We know from \cref{EltsOrder4} that
	\[ \text{$|s| = |t| = 4$, \ $2s = 2t$, \ and \ $\pi^\varphi_{h+s}(t) \neq \pi^\varphi_{h}(t)$} . \]
Let 
	\[ V = \{\, g \in G_1 \mid \text{$\pi^\varphi_g(s) \neq \varphi(s)$ and $\pi^\varphi_g(t) \neq \varphi(t)$} \,\} 
		= G_1 \smallsetminus \bigl( A(\varphi, s) \cup A(\varphi, t) \bigr) ,\]
and define $\alpha \colon G_1 \to G_1$ by 
	\[ \alpha(g) = \begin{cases}
		g + 2s & \text{if $g \in V$}; \\
		g & \text{otherwise}
		. \end{cases} \]

We break the remainder of the proof into short steps.

\refstepcounter{stepholder}
\begin{step}
The above function $\alpha$ is a colour-preserving automorphism of\/ $\Cay(G_1; S_1)$.
\end{step}
\fullCref{pi}{chrom-coset} implies that $A(\varphi, s) + 2s = A(\varphi, s)$. Since $2s = 2t$, we also have $A(\varphi, t) + 2s = A(\varphi, t)$. Therefore $V + 2s = V$, so it is easy to see that $\alpha$ is a bijection.

All that remains is to show that $\alpha$ preserves the colour of every edge. To this end, let $g_1$ and~$g_2$ be two adjacent vertices of $\Cay(G_1; S_1)$. If both of the vertices are in~$V$ or neither of the two vertices are in~$V$, then $\alpha(g_2) - \alpha(g_1) = g_2 - g_1$, so $\alpha$ preserves the colour of the edge from $g_1$ to~$g_2$.  We can now assume that $g_1 \notin V$ and $g_2 \in V$. Since $g_1 \notin V$, we may assume (perhaps after interchanging $s$ and~$t$) that $\pi^\varphi_{g_1}(s) = \varphi(s)$. However, since $g_2 \in V$, we know $\pi^\varphi_{g_2}(s) \neq \varphi(s)$. Therefore, we have $\pi^\varphi_{g_1}(s) \neq \pi^\varphi_{g_2}(s)$, so \cref{EltsOrder4} implies $|g_2 - g_1| = 4$ and $2(g_2 - g_1) = 2s$. Then
	\[ \alpha(g_2) - \alpha(g_1) 
		= \bigl( g_2 + 2s \bigr) - g_1 
		= \bigl( g_2 + 2(g_2 - g_1) \bigr) - g_1 
		= 3(g_2 - g_1)
		= - (g_2 - g_1)
		, \]
so $\alpha$ preserves the colour of the edge from $g_1$ to~$g_2$ in this case, as well.

\begin{step}
We have $\sgood{\varphi} \subseteq \sgood{\varphi \compose \alpha}$.
\end{step}
Let $r \in \sgood{\varphi}$, and let $g \in G_1$. Then $\pi^\varphi_g(r) = \pi^\varphi_{g+s}(r) = \pi^\varphi_{g+t}(r)$, so we see from \cref{EltsOrder4} that $\pi^\varphi_{g+r}(s) = \pi^\varphi_{g}(s)$ and $\pi^\varphi_{g+r}(t) = \pi^\varphi_{g}(t)$.  Therefore, either $g$ and $g + r$ are both in~$V$, or neither is.  Hence, there exists $w \in \{0_{G_1}, 2s\}$, such that $\alpha(g) = g + w$ and $\alpha(g + r) = g + r + w$. 

	We claim that, for all $x \in G_1$ (including $g$ and $g + r$), we have $\varphi(x + w) = \varphi(x) + \varphi(w)$. This is obvious if $w = 0_{G_1}$, so assume $w = 2s$. Then, using the fact that $|s| = 4$, so $-2s = 2s$, we have
	\[ \varphi(x + w) = \varphi(x + 2s) = \varphi(x) \pm \varphi(2s) = \varphi(x) + \varphi(2s) = \varphi(x) + \varphi(w) . \]
This completes the proof of the claim.

Now
	\begin{align*}
	(\varphi \compose \alpha)(g + r) - (\varphi \compose \alpha)(g) 
	&= \varphi \bigl( \alpha(g + r) \bigr) - \varphi \bigl( \alpha(g) \bigr)
	\\&= \varphi( g + r + w) - \varphi ( g + w )
	\\&= \bigl( \varphi( g + r ) + \varphi(w) \bigr) - \bigl( \varphi ( g )  + \varphi(w) \bigr)
	\\&= \bigl( \varphi(g) + \varphi(r) + \varphi(w) \bigr) - \bigl( \varphi ( g )  + \varphi(w) \bigr)
	\\&= \varphi(r)
	, \end{align*}
so $r \in \sgood{\varphi \compose \alpha}$.

\begin{step}
We have $A(\varphi, s) \subsetneq A(\varphi \compose \alpha, s)$.
\end{step}
Let $g \in A(\varphi, s)$, so $\pi^\varphi_g(s) = \varphi(s)$. Then, by \fullcref{pi}{chrom-coset}, we also have $\pi^\varphi_{g+s}(s) = \varphi(s)$. So neither $g$ nor $g + s$ belongs to~$V$. Therefore $\alpha$ fixes $g$ and $g + s$, so 
	\[ (\varphi \compose \alpha)(g + s) - (\varphi \compose \alpha)(g)
		= \varphi(g + s) - \varphi(g)
		= \varphi(s) , \]
so $g \in A(\varphi \compose \alpha, s)$. This establishes that $A(\varphi, s) \subseteq A(\varphi \compose \alpha, s)$.

We now show that the inclusion is proper.
Recall that $\pi^\varphi_{h+s}(t) \neq \pi^\varphi_{h}(t)$. So we can assume, without loss of generality, that $\pi^\varphi_{h+s}(t) = -\varphi(t)$ and $\pi^\varphi_{h}(t) = \varphi(t)$. (If necessary, replace $h$ with $h + s$ and replace $s$ with~$-s$.) By the choice of~$h$, we know $\pi^\varphi_{h}(s) = -\varphi(s)$, so \fullcref{pi}{chrom-coset} implies $\pi^\varphi_{h+s}(s) = -\varphi(s)$. Therefore $h + s \in V$. On the other hand, $h \notin V$, because $\pi^\varphi_{h}(t) = \varphi(t)$. Therefore
	\[ \varphi \bigl( \alpha(h + s) \bigr) - \varphi \bigl( \alpha(h) \bigr) 
		=  \varphi \bigl( h + s + 2s \bigr) - \varphi \bigl( h \bigr) 
		=  3 \pi^\varphi_h(s)
		=  - \pi^\varphi_h(s)
		= \varphi(s) , \]
so $h \in A(\varphi \compose \alpha, s)$. However, $h \notin A(\varphi, s)$ (because $\pi^\varphi_h(s) = - \varphi(s)$). This establishes that the inclusion $A(\varphi, s) \subseteq A(\varphi \compose \alpha, s)$ is proper. 

\begin{step}
Completion of the proof.
\end{step}
Suppose, for a contradiction, that it is impossible to find a colour-preserving automorphism~$\alpha$ of $\Cay(G_1; S_1)$ (with $\alpha(0_{G_1}) = 0_{G_1}$), such that $S_1 \subseteq \sgood{\varphi \compose \alpha}$. Then, letting $S_1 = \{s_1,\ldots,s_n\}$, there exist $k \le n$, and a colour-preserving automorphism~$\alpha$ of $\Cay(G_1; S_1)$, such that $\{s_1,\ldots,s_{k-1}\} \subseteq \sgood{\varphi \compose \alpha}$ (and $\alpha(0_{G_1}) = 0_{G_1}$), but there does not exist such an automorphism for $\{s_1,\ldots,s_{k}\}$. To simplify notation, assume $\{s_1,\ldots,s_{k-1}\} \subseteq \sgood{\varphi }$ (by replacing $\varphi$ with $\varphi \compose \alpha$).

Now, we inductively construct a sequence $\alpha_0, \alpha_1, \ldots$ of colour-preserving automorphisms of the Cayley graph $\Cay(G_1; S_1)$, such that for each~$i$:
	\begin{itemize}
	\item $\alpha_i(0_{G_1}) = 0_{G_1}$,
	\item $\{s_1,\ldots,s_{k-1}\} \subseteq \sgood{\varphi \compose \alpha_i}$,
	and
	\item $A(\varphi \compose \alpha_i, s_k) \subsetneq A(\varphi \compose \alpha_{i+1}, s_k)$.
	\end{itemize}
Let $\alpha_0$ be the identity map. Given $\alpha_i$, we know $s_k \notin \sgood{\varphi \compose \alpha_i}$, so
	Steps 1--3 
provide a colour-preserving automorphism~$\alpha$, such that $\sgood{\varphi \compose \alpha_i} \subseteq \sgood{\varphi \compose \alpha_i \compose \alpha}$ and 
 \[ A(\varphi \compose \alpha_i, s) \subsetneq A(\varphi \compose \alpha_i \compose \alpha, s_k) . \] 
 Let $\alpha_{i+1} = \alpha_i \compose \alpha$.

However, it follows from \cref{EltsOrder4} that, for each~$i$, the set $A(\varphi \compose \alpha_i, s_k)$ is a union of cosets of $H \coloneqq \langle T(S_1, 2 s_k) \rangle$. Also, from the definition of $T(S_1, 2 s_k)$, it is clear that $2S_1 \subseteq H$, so $2G_1 \subseteq H$. Since $G_1$ is finitely generated (recall that we are assuming $S_1$ is finite), this implies $|G_1 : H|$ is finite. So there are only finitely many possibilities for $A(\varphi \compose \alpha_i, s_k)$, which contradicts the existence of the infinite sequence 
	\[ A(\varphi \compose \alpha_0, s_k) \subsetneq A(\varphi \compose \alpha_1, s_k) \subsetneq A(\varphi \compose \alpha_2, s_k) \subsetneq \cdots
	. \qedhere \]
\end{proof}

\goodbreak 

\begin{rems}
\leavevmode
	\begin{enumerate}
	\item \Cref{ChromIso} tells us that every chromatic isomorphism~$\varphi$ from $\Cay(G_1; S_1)$ to $\Cay(G_2; S_2)$ is of the form $\tau \compose \alpha$, where $\tau$ is a group isomorphism from $G_1$ to~$G_2$, and $\alpha$ is a colour-preserving automorphism of $\Cay(G_1; S_1)$.
By applying \cref{ChromIso} to the inverse of~$\varphi$, we see that $\varphi$ can also be written in the form $\alpha' \compose \tau'$, where $\tau'$ is a group isomorphism  from $G_1$ to~$G_2$, and $\alpha'$ is a colour-preserving automorphism of $\Cay(G_2; S_2)$.

	\item The proof of \cref{ChromIso} provides additional information about colour-preserving automorphisms.  In particular, every colour-preserving automorphism is the composition of an affine automorphism and a colour-preserving automorphism $\alpha$, such that 
		\begin{itemize}
		\item $\alpha(0_{G_1}) = 0_{G_1}$,
		and
		\item $|\alpha(g) - g| \le 2$ for all $g \in G_1$.
		\end{itemize}
	This implies that $\alpha$ fixes every element of~$S_1$ that does not have order~$4$.
	\end{enumerate}
\end{rems}

\begin{proof}[\bf Proof of \cref{Tcharacterization}]
($\ref{Tcharacterization-CCA} \Rightarrow \ref{Tcharacterization-strong}$)
Let $\varphi$ be a colour-permuting automorphism. By \cref{ChromIso}, it is the composition of a colour-preserving automorphism and a group isomorphism. However, since the Cayley graph is CCA, every colour-preserving automorphism is affine. Therefore, $\varphi$ is the composition of two affine maps, and is therefore affine.

\medbreak

($\ref{Tcharacterization-strong} \Rightarrow \ref{Tcharacterization-CCA}$)
Obvious.

\medbreak

($\ref{Tcharacterization-index} \Rightarrow \ref{Tcharacterization-CCA}$) 
We prove the contrapositive: assume there is a colour-preserving automorphism~$\varphi$ of $\Cay(G;S)$ that is not affine. Since $\Cay(G;S)$ is connected, this implies there exist $g \in G$ and $s,t \in S$, such that $\pi^\varphi_{g + t}(s) \neq \pi^\varphi_g(s)$. For convenience, let $H = \langle T(S,2s)\rangle$. It follows from Lemmas 
\ref{EltsOrder4} and \fullref{pi}{chrom-coset} that $\pi^\varphi_{g + h}(s) = \pi^\varphi_g(s)$ for all $h \in \langle H, s \rangle$, so we must have $t \notin \langle H, s \rangle$. Similarly, since \cref{EltsOrder4} tells us that $\pi^\varphi_{g + s}(t) \neq \pi^\varphi_g(t)$, and that $2t = 2s$ (so $\langle T(S, 2t) \rangle = \langle T(S,2s) \rangle = H$), we must have $s \notin \langle t, H \rangle$. Then $H$, $s + H$, and $t + H$ are three different cosets of~$H$, so $|G : H| \ge 3$.

\medbreak

($\ref{Tcharacterization-CCA} \Rightarrow \ref{Tcharacterization-index}$)
We again prove the contrapositive: assume 
	    \[ \text{$|G: \langle T(S,2s)\rangle| > 2$ for some $s\in S$ with $|s| = 4$.} \]
Define $\varphi \colon G \to G$ by
\[
\varphi(x) =
    \begin{cases}
        x + 2s & \text{if $x \in \langle T(S, 2s) \rangle$} , \\
        x & \text{otherwise},
    \end{cases}
\]
Since $2s \in \langle T(S, 2s) \rangle$, it is easy to see that $\varphi$ is a bijection.

Also, since $\lvert G : \langle T(S,c)\rangle\rvert > 2$, it is easy to see that the set of fixed points of~$\varphi$ (i.e., the complement of $\langle T(S,c)\rangle$) is not a coset of any subgroup of~$G$ \csee{complement}. Therefore, we see from \cref{morehalfaffine} that $\varphi$ is not affine.  

Hence, it suffices to show $\varphi$ is colour-preserving. (For then $\Cay(G;S)$ has a colour-preserving automorphism that is not affine, so $\Cay(G; S)$ is not CCA.) Given $g,h \in G$ with $g - h \in S$, we wish to show $\varphi(g) - \varphi(h) \in \{\pm (g - h)\}$. By the definition of~$\varphi$, there exist $c_g,c_h \in \{0,2s\}$, such that $\varphi(g) = g + c_g$ and $\varphi(h) = h + c_h$.  If $c_g = c_h$, then 
	\[ \varphi(g) - \varphi(h) = (g + c_g) - (h + c_h) = g - h \in \{\pm (g - h)\} , \]
as desired. 

So we may assume $c_g \neq c_h$. 
Assume without loss of generality that $g \in \langle T(S, 2s) \rangle$ and $h \notin \langle T(S, 2s) \rangle$. Then $t \coloneqq g - h \notin \langle T(S, 2s) \rangle$. Since $t \in S$, then the definition of $T(S, 2s)$ implies $2t = 2s$ (and hence $|t| = 4$). Then
	\[ \varphi(g) - \varphi(h)
	= (g + 2s) - h
	= (g - h) + 2s
	= t + 2t
	= 3t
	= -t 
	= -(g-h)
	\in  \{\pm (g - h)\}
	, \]
as desired.
\end{proof}

\begin{proof}[\bf Proof of \cref{2genAbelNotCCA}]
($\Rightarrow$)
Since the Cayley graph $\Cay \bigl( G; \{\pm a, \pm b\} \bigr)$ is not CCA, there is colour-preserving automorphism $\varphi$ that is not affine.
Since $\{a,b\}$ generates~$G$, then there exist $s,t\in\{a,b\}$ and $g \in G$ with $\pi_g^\varphi(s)\neq\pi_{g+t}^\varphi(s)$. 
So, \Cref{EltsOrder4} implies that $2a=2b$,  $|a|=|b|=4$, and  $a \notin \langle b \rangle$. The only abelian group with two generators satisfying these conditions is $\Z_4\times\Z_2$. 

($\Leftarrow$)
Let $G \cong \Z_4\times \Z_2$. 
The elements of order $4$ in $G$ are 
$\{ \pm(1,0), \pm(1,1) \}$, so there is only one generating set $\{ \pm a, \pm b \}$ with $|a| = |b| = 4$ that needs to be considered: we show that the abelian Cayley graph $\Cay(\Z_4 \times \Z_2; S)$ is not CCA for $S = \{\pm(1,0), \pm(1,1)\}$.
This can easily be accomplished by exhibiting a specific colour-preserving automorphism that is not affine \cite[Example~2.2]{HKMM}, but we will use this as an opportunity to demonstrate \cref{Tcharacterization}.

For every $s \in S$ we have $2s = (2,0)$. Therefore $T(S,2s) = \{2s\} = \{(2,0)\}$, and hence $\langle T(S,2s)\rangle = \{(0,0),(2,0)\}$ has order $2$. Therefore $|G:\langle T(S,2s) \rangle| = 4 > 2$, so we see from \cref{Tcharacterization} that the Cayley graph is not CCA.
\end{proof}

\begin{cor} \label{CCAIffOplus}
Assume $G$ is an abelian group. Then the following are equivalent:
\noprelistbreak
	\begin{enumerate}
	\item \label{CCAIffOplus-CCA}
	$G$ has a connected Cayley graph that is \underline{not} CCA.
	\item \label{CCAIffOplus-strong}
	$G$ has a connected Cayley graph that is \underline{not} strongly CCA.
	\item \label{CCAIffOplus-4}
	$G$ has a subgroup~$H$, such that either
		\begin{itemize}
		\item $G \iso H \oplus \Z_4 \oplus \Z_2$, 
		or
		\item $G \iso H \oplus \Z_2 \oplus \Z_2$, and $H$ has an element of order~$4$.
		\end{itemize}
	\item \label{CCAIffOplus-8}
	$G$ has a subgroup~$H$, such that either
		\begin{itemize}
		\item $G \iso H \oplus \Z_4 \oplus \Z_2$, 
		or
		\item $G \iso H \oplus \Z_2 \oplus \Z_2$, and $H$ has an element of order~$8$.
		\end{itemize}
	\end{enumerate}
\end{cor}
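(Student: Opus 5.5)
The equivalence of \pref{CCAIffOplus-CCA} and \pref{CCAIffOplus-strong} is immediate from \cref{Tcharacterization}, which shows that for each individual connected Cayley graph, being CCA and being strongly CCA are the same. So the work is to show \pref{CCAIffOplus-CCA} $\Leftrightarrow$ \pref{CCAIffOplus-4} $\Leftrightarrow$ \pref{CCAIffOplus-8}. By \fullcref{Tcharacterization}{index}, \pref{CCAIffOplus-CCA} holds if and only if there are a symmetric generating set $S$ and some $s\in S$ with $|s|=4$ such that $|G:\langle T(S,2s)\rangle|\ge 3$. Put $c=2s$ and $H_0=\langle T(S,c)\rangle$. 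Every element of $S\smallsetminus T(S,c)$ is an element $t$ with $2t=c$. Also $2S\subseteq H_0$, so $G/H_0$ is an elementary abelian $2$-group of rank $\ge 2$, generated by the images of $s$ and of the other elements $t$ with $2t=c$.

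\textbf{From \pref{CCAIffOplus-CCA} to \pref{CCAIffOplus-4}.} Since $G/H_0$ is elementary abelian of order $\ge 4$, there are $s,t\in S$ with $2s=2t=c$ whose images in $G/H_0$ are independent. Let $u=t-s$. Then $2u=0$ and $u\notin H_0+\langle s\rangle$. The aim is to split off $\langle s\rangle\oplus\langle u\rangle\cong\Z_4\oplus\Z_2$, or something close to it. The natural approach is to choose a homomorphism $\chi\colon G\to\Z_2$ with $\chi(H_0)=\chi(s)=0$ and $\chi(u)=1$; this exists because $G/H_0$ is an $\mathbb F_2$-vector space. Then $G=\ker\chi\oplus\langle u\rangle$. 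Inside $K=\ker\chi$ we have $s$ of order $4$, and we need to decide whether $\langle s\rangle$ is a direct summand of $K$. The main obstacle is the infinite, not finitely generated case. Here I would use that $\langle s\rangle$ is a bounded (so pure-injective) summand as soon as it is pure, and otherwise look at $c$ and $H_0$.

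This splits into two cases. If $c\notin 2K'$ for a suitable complement, then $\langle s\rangle$ is pure in $K$ and splits, giving $G\cong H\oplus\Z_4\oplus\Z_2$. Purity of order-$4$ cyclic subgroups reduces to checking that $c\notin 4G$ and $s\notin 2G$ in the right sense. Otherwise $c=2s\in 4K$, say $c=4y$; then $y$ has order $8$. In that case I would split off a second $\Z_2$ using a second character vanishing on $H_0+\langle u\rangle$ and nonzero on $s$, which exists because $s\notin H_0+\langle u\rangle$. This gives $G\cong H\oplus\Z_2\oplus\Z_2$ with $y\in H$ of order $8$, hence of course an element of order $4$. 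I expect the bookkeeping in this dichotomy to be the delicate step, so I would first verify it in the finitely generated case using invariant factors, and then argue via characters in general.

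\textbf{From \pref{CCAIffOplus-4} to \pref{CCAIffOplus-8}, and from \pref{CCAIffOplus-8} to \pref{CCAIffOplus-CCA}.} The implication from \pref{CCAIffOplus-8} to \pref{CCAIffOplus-4} is trivial. For the converse, suppose $G\cong H\oplus\Z_2\oplus\Z_2$, where $h\in H$ has order $4$ but $H$ has no element of order $8$. If $H$ has an element of order $4$ and exponent $4$ on its $2$-part, then $\langle h\rangle$ is pure and bounded, hence a summand: $H=H'\oplus\Z_4$. Then $G\cong (H'\oplus\Z_2)\oplus\Z_4\oplus\Z_2$.

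For \pref{CCAIffOplus-8} $\Rightarrow$ \pref{CCAIffOplus-CCA}, I would construct $S$ explicitly and invoke \cref{Tcharacterization}.
\begin{itemize}
\item In the case $G=H\oplus\langle a\rangle\oplus\langle e\rangle$ with $|a|=4$ and $|e|=2$, take $S=\pm\bigl(H\smallsetminus\{0\}\bigr)\cup\{\pm a,\pm(a+e)\}$ (connectivity is clear). Elements $t$ with $2t=2a$ lie only in $a+\langle e\rangle$ plus $H$-parts of order $\le 2$, and none of these is in $S$ except $\pm a,\pm(a+e)$. So $\langle T(S,2a)\rangle=H\oplus\langle 2a\rangle$, which has index $4$.
\item In the case $G=H\oplus\langle e_1\rangle\oplus\langle e_2\rangle$ with $y\in H$ of order $8$, let $c=4y$ and use $s=2y+e_1$ and $t=2y+e_2$, both of order $4$ with $2s=2t=c$. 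Take $S=\{\pm s,\pm t\}\cup\pm\{\,x\in H : 2x\ne c,\ x\neq 0\,\}$. These $x$ generate $H$: if $2x=c$ then $x=(x-y)+y$ with $2(x-y)\ne c$ and $2y\ne c$. Then $\langle T(S,c)\rangle=H$, and the images of $s,t$ are independent in $G/H$, so the index is $4$.
\end{itemize}
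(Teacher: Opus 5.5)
Your overall structure is sound. (1)$\Leftrightarrow$(2) via \cref{Tcharacterization} is correct. So is (3)$\Rightarrow$(4) via purity of $\langle h\rangle$: having no element of order~$8$ forces $h\notin 2H$ and $2h\notin 4H$. Your explicit generating sets for (4)$\Rightarrow$(1) are also correct; the second is the paper's construction for (3)$\Rightarrow$(2) with $a=2y$, and pruning the $x$ with $2x=c$ from $S$ is unnecessary, since only $\langle T(S,c)\rangle\subseteq H$ matters.

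\textbf{The gap} is in the second branch of (1)$\Rightarrow$(3). A homomorphism $\chi'\colon G\to\Z_2$ with $\chi'(s)=1$ does not by itself split off a $\Z_2$. For that you need an element of order~$2$ on which $\chi$ vanishes and $\chi'$ does not, and $s$ has order~$4$.

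You never say where $c=4y$ is used, and that is exactly what supplies this element. We have $2y\in 2G\subseteq H_0$ and $2(2y)=c=2s$. So $s-2y$ has order~$2$, with $\chi(s-2y)=0$ and $\chi'(s-2y)=1$. Hence
\[
G=(\ker\chi\cap\ker\chi')\oplus\langle u\rangle\oplus\langle s-2y\rangle ,
\]
and the first summand contains $2y$, of order~$4$, which is all (3) asks for. Your claim that $y$ itself lies in the complement does not follow, since your $\chi'$ need not vanish on $y$; it can be arranged, but it is not needed.

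The first branch should also be stated cleanly. Since $s\notin H_0\supseteq 2G$ and $4K=4G$, the subgroup $\langle s\rangle$ is pure in $K$ exactly when $c\notin 4G$. It is then a bounded pure subgroup, hence a summand, and no ``suitable complement $K'$'' is involved.

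With this repair your route is a valid alternative to the paper's. The paper avoids characters: it takes $H$ to be the preimage of a complement of $\langle\bar s,\bar t\rangle$ in $G/H_0$, so $H\cap\langle s,t\rangle=\langle 2s\rangle$, and splits on whether $2s\in 2H$.
\begin{itemize}
\item If $2h=2s$ with $h\in H$, it uses $s-h$ and $t-h$ (the device your second branch is missing) to get $G=H\oplus\langle s-h,t-h\rangle$.
\item Otherwise the order-$2$ subgroup $\langle 2s\rangle$ has height $0$ in $H$ and splits off, giving $G=H_0\oplus\langle s,t\rangle$.
\end{itemize}
Your dichotomy ($c\in 4G$ or not) differs from the paper's, since $4G\subseteq 2H$. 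It requires purity of an order-$4$ subgroup rather than an order-$2$ one, but both case splits are exhaustive and each branch yields (3).
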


\begin{proof}
($\ref{CCAIffOplus-CCA} \Leftrightarrow \ref{CCAIffOplus-strong}$)
Immediate from ($\ref{Tcharacterization-CCA} \Leftrightarrow \ref{Tcharacterization-strong}$) of \cref{Tcharacterization}.

\medbreak

($\ref{CCAIffOplus-strong} \Rightarrow \ref{CCAIffOplus-4}$)
By \cref{Tcharacterization}, we know $|G: \langle T(S,2s)\rangle| > 2$ for some $s\in S$ with $|s|=4$. Since $S$ generates~$G$, there is some $t \in S$, such that $t \notin \langle s, T(S,2s) \rangle$ (so $2t = 2s$). 
The group $\overline{G} \coloneqq G/\langle T(S,2s)\rangle$ is an elementary abelian $2$-group (i.e., a vector space over~$\Z_2$), so we can write 
	\[ \overline{G} = \overline{H} \oplus \langle \overline{s}, \overline{t} \rangle , \]
for some subgroup $\overline{H}$ of~$\overline{G}$. Let $H$ be the pullback of~$\overline{H}$ to a subgroup of~$G$ that contains $\langle T(S,2s)\rangle$, and note that $H \cap \langle s, t \rangle = \langle 2s \rangle$. 

If there is some $h \in H$, such that $2h = 2s$, then we have
	\[ G = H \oplus \langle s - h, t - h \rangle \iso H \oplus \Z_2 \oplus \Z_2 , \]
and $H$ contains the element~$h$ of order~$4$.

So we can assume $2s \notin 2H$. Then, since $H/(2H)$ is a vector space, it is not difficult to show there is a subgroup~$H_0$ of~$H$, such that $H = H_0 \oplus \langle 2s \rangle$. So
	\[ G = H_0 \oplus \langle s,t \rangle \iso H_0 \oplus \Z_4 \oplus \Z_2 , \]
as desired. 

\medbreak

($\ref{CCAIffOplus-4} \Rightarrow \ref{CCAIffOplus-strong}$)
To simplify notation, assume $G$ is {equal} to the direct sum, rather than just being isomorphic to it.
Let $S = H \cup \{s,t\}$, where
	\begin{itemize}
	\item $s = (0,1,0)$ and $t = (0,1,1)$ if $G = H \oplus \Z_4 \oplus \Z_2$, 
	and
	\item $s = (a, 1, 0)$ and $t = (a, 0,1)$ if $G = H \oplus \Z_2 \oplus \Z_2$, and $H$ has an element~$a$ of order~$4$.
	\end{itemize}
Then $|s| = 4$ and $2s = 2t$, so 
	\[ \langle T(S, 2s) \rangle \subseteq \langle H, 2s \rangle . \]
In both cases, it is easy to verify that $G /  \langle H, 2s \rangle \iso \Z_2 \times \Z_2$, so we conclude from \cref{Tcharacterization} that $\Cay(G;S)$ is not strongly CCA.

\medbreak

($\ref{CCAIffOplus-4} \Rightarrow \ref{CCAIffOplus-8}$)
We may assume $G = H \oplus \Z_2 \oplus \Z_2$, and that $H$ has an element~$a$ of order~$4$, but does not have an element of order~$8$. Since $H$ has no element of order~$8$, it is not difficult to see that $\langle a \rangle$ is a direct summand of~$H$: we have $H = H_0 \oplus \langle a \rangle$, for some subgroup~$H_0$. (For example, choose a complement $\overline{H_1}$ to $\overline{a}$ in the vector space $H/(2H)$. Then choose a complement $\overline{H_0}$ to $\overline{2a}$ in the vector space $(2H)/(4H)$.) Then $G = H_0 \oplus \langle a \rangle \oplus \Z_2 \oplus \Z_2$. Since $\langle a \rangle \iso \Z_4$, this group obviously has a direct summand that is isomorphic to $\Z_4 \oplus \Z_2$. So $G$ appears in the first case of~\pref{CCAIffOplus-8}.

\medbreak

($\ref{CCAIffOplus-8} \Rightarrow \ref{CCAIffOplus-4}$)
This is trivial: every group with an element of order~$8$ also has an element of order~$4$.
\end{proof}

\begin{proof}[\bf Proof of \cref{CCAiffFG}]
($\Leftarrow$) Immediate from ($\ref{CCAIffOplus-8} \Rightarrow \ref{CCAIffOplus-CCA}$) of \cref{CCAIffOplus}.

\medbreak

($\Rightarrow$) 
By \fullcref{CCAIffOplus}{8}, we may assume $G$ has a subgroup~$H$, such that $G \iso H \oplus \Z_2 \oplus \Z_2$, and $H$ has an element of order~$8$. Since $G$ is a finitely generated abelian group, the subgroup~$H$ is also finitely generated \cite[p.~82]{Fuchs}. So $H$ is a direct sum of finitely many cyclic groups, such that each of these cyclic subgroups either has prime-power order, or is infinite \cite[Thm.~3.2.7, p.~84]{Fuchs}: 
	\[ H = C_1 \oplus C_2 \oplus \cdots \oplus C_r . \]
Since $H$ has an element of order~$8$, we may assume (by permuting the factors) that $C_r$ is finite, and $|C_r|$ is divisible by~$8$. Then, since $|C_r|$ is a prime-power, we have $|C_r| = 2^n$, for some~$n$. Furthermore, $n \ge 3$ since $|C_r|$ is divisible by~$8$. Then
	\[ G \iso H \oplus \Z_2 \oplus \Z_2 \iso C_1 \oplus C_2 \oplus \cdots \oplus C_{r-1} \oplus \Z_{2^n} \oplus \Z_2 \oplus \Z_2 , \]
so $G$ has $\Z_{2^n} \oplus \Z_2 \oplus \Z_2$ as a direct summand.
\end{proof}

\end{document}